\DeclareMathAlphabet{\oldcal}{OMS}{zplm}{m}{n}
\newtheorem{theorem}{Theorem}[section]
\newtheorem{corollary}[theorem]{Corollary}
\newtheorem{remark}[theorem]{Remark}
\newtheorem{lemma}[theorem]{Lemma}
\newtheorem{definition}[theorem]{Definition}
\begin{document}

\title{Continuous Selections of Lipschitz Extensions in Metric Spaces}
\author{Rafa Esp\'{i}nola$^{a}$, Adriana Nicolae$^{b,c}$}
\date{}

\maketitle
\begin{center}
{\footnotesize
$^{a}$Departamento de An\'alisis Matem\'atico - IMUS, Universidad de Sevilla, Apdo. de Correos 1160, 41080 Sevilla, Spain
\ \\
$^{b}$Department of Mathematics, Babe\c s-Bolyai University, Kog\u alniceanu 1, 400084 Cluj-Napoca, Romania
\ \\
$^{c}$Simion Stoilow Institute of Mathematics of the Romanian Academy, Research group of the project PD-3-0152,\\ 
P.O. Box 1-764, RO-014700 Bucharest, Romania\\
\ \\
E-mail addresses:  espinola@us.es (R. Esp\' inola), anicolae@math.ubbcluj.ro (A. Nicolae)
}
\end{center}

\begin{abstract}
This paper deals with the study of parameter dependence of extensions of Lipschitz mappings from the point of view of continuity. We show that if assuming appropriate curvature bounds for the spaces, the multivalued extension operators that assign to every nonexpansive (resp. Lipschitz) mapping all its nonexpansive extensions (resp. Lipschitz extensions with the same Lipschitz constant) are lower semi-continuous and admit continuous selections. Moreover, we prove that Lipschitz mappings can be extended continuously even when imposing the condition that the image of the extension belongs to the closure of the convex hull of the image of the original mapping. When the target space is hyperconvex one can obtain in fact nonexpansivity.\\

\noindent {\em Keywords:} Lipschitz mapping, extension operator, continuous selection, geodesic space of bounded curvature, hyperconvexity

\end{abstract}

\section{Introduction}
The Kirszbraun theorem \cite{Kir34} is a fundamental result in the theory of Lipschitz extensions and states that for any Lipschitz function $f : A \subseteq \mathbb{R}^n \to \mathbb{R}^m$ there exists a Lipschitz extension $f' : \mathbb{R}^n \to \mathbb{R}^m$ with the same Lipschitz constant. The result for arbitrary Hilbert spaces goes back to Valentine \cite{Val45}. Aronszajn and Panitchpakdi \cite{AroPan56} introduced the concept of hyperconvexity, which is closely related to this problem since a metric space $Y$ is hyperconvex if and only if given any subspace $A$ of any metric space $X$, every nonexpansive mapping $f : A \to Y$ admits a nonexpansive extension to $X$.

The first result that extends Kirszbraun's theorem to the metric setting by imposing curvature bounds in the sense of Alexandrov was given by Lang and Schroeder in \cite{LanSch97} (see also \cite{KucSta88,Val45} for previous particular results). The same problem was later approached by Alexander, Kapovitch and Petrunin in \cite{AleKapPet11} where a different proof method is considered.

All the aforementioned results guarantee the existence of an extension for the original mapping. However, this extension is not necessary unique and no information is given on the parameter dependence of the extensions. Kopeck\'{a} studied the process of assigning extensions to mappings from the point of view of continuity providing positive answers first in Euclidean \cite{Kop12a} and then in Hilbert spaces \cite{Kop12}. Namely, the multivalued extension mappings that assign to every nonexpansive (resp. Lipschitz) mapping all its nonexpansive extensions (resp. Lipschitz extensions with the same Lipschitz constant) are proved to be lower semi-continuous using Kirszbraun's theorem and a homotopy argument. Applying Michael's selection theorem one obtains continuous selections of these multivalued extension operators. Kopeck\'{a} and Reich further generalized these results in \cite{KopRei11}, obtaining a continuous singlevalued extension operator with the additional condition that the image of the extension belongs to the closed convex hull of the image of the original mapping.

A natural question is to study this problem in geodesic metric spaces with curvature bounds in the sense of Alexandrov since in this context a generalized version of Kirszbraun's theorem holds. Here we show that one can indeed prove counterparts of such continuity results in this setting. In Section \ref{sect-lsc-cont-sel} we show that assuming appropriate curvature bounds for the spaces, the multivalued extension mappings are lower semi-continuous and admit continuous selections. Moreover, we prove in Section \ref{convexity} that Lipschitz mappings can be extended continuously even when imposing the above mentioned convexity condition on the image of the extension. Section \ref{hyperconvexity} deals with the case where the target space is hyperconvex and shows that in this situation one can obtain in fact nonexpansivity.

\section{Preliminaries} \label{prelim}
Let $(X,d)$ be a metric space. A {\it geodesic path} from $x$ to $y$ is a mapping $c:[0,l] \subseteq \mathbb{R} \to X$ such that $c(0) = x, c(l) = y$ and $d\left(c(t),c(t^{\prime})\right) = \left|t - t^{\prime}\right|$ for every $t,t^{\prime} \in [0,l]$. The image $c\left([0,l]\right)$ of $c$ forms a {\it geodesic segment} which joins $x$ and $y$. Note that a geodesic segment from $x$ to $y$ is not necessarily unique. $(X,d)$ is a {\it geodesic space} if every two points in $X$ can be joined by a geodesic path. A point $z\in X$ belongs to a geodesic segment joining $x$ and $y$ if and only if there exists $t\in [0,1]$ such that $d(z,x)= td(x,y)$ and $d(z,y)=(1-t)d(x,y)$, and we will write $z=(1-t)x+ty$ for simplicity. For more details on geodesic metric spaces the reader may check \cite{Bri99}.

A geodesic space $(X,d)$ is {\it Busemann convex} if given any pair of geodesic paths $c_1 : [0, l_1] \to X$ and $c_2 : [0,l_2] \to X$ with $c_1(0) = c_2(0)$ one has 
\[d(c_1(tl_1),c_2(tl_2)) \le td(c_1(l_1),c_2(l_2)), \quad \text{for every } t \in [0,1].\] 

A subset $C$ of $X$ is {\it convex} if any geodesic segment that joins every two points of $C$ is contained in $C$. Let $G_1(C)$ denote the union of all geodesics segments with endpoints in $C$. Note that $C$ is convex if and only if $G_1(C) = C$. Recursively, for $n \ge 2$ we set $G_n(C) = G_1(G_{n-1}(C))$. The {\it convex hull} of $C$ is
\[\mbox{co}(C) = \bigcup_{n \in \mathbb{N}}G_n(C).\]
By $\overline{\mbox{co}}(C)$ we denote the closure of the convex hull. It is easy to see that in a Busemann convex geodesic space, the closure of the convex hull is convex and hence it is the smallest closed convex set containing $C$.

For $\kappa \in \mathbb{R}$ let $M^2_{\kappa}$ denote the complete, simply connected model surface of constant curvature $\kappa$. In the sequel we assume that $\kappa \le 0$.

A {\it geodesic triangle} $\Delta = \Delta(x_1,x_2,x_3)$ consists of three points $x_1, x_2$ and $x_3$ in $X$ and three geodesic segments corresponding to each pair of points. A {\it $\kappa$-comparison triangle} for $\Delta$ is a triangle $\bar{\Delta} = \Delta(\bar{x}_1, \bar{x}_2, \bar{x}_3)$ in $M^2_{\kappa}$ such that $d(x_i,x_j) = d_{M^2_{\kappa}}(\bar{x}_i,\bar{x}_j)$ for $i,j \in \{1,2,3\}$. For $\kappa$ fixed, $\kappa$-comparison triangles of geodesic triangles always exist and are unique up to isometry. 

A geodesic triangle $\Delta$ satisfies the {\it CAT$(\kappa)$} (resp. {\it reversed CAT$(\kappa)$}) {\it inequality} if for every $\kappa$-comparison triangle $\bar{\Delta}$ of $\Delta$ and for every $x,y \in \Delta$ we have
\[d(x,y) \le d_{M^2_{\kappa}}(\bar{x},\bar{y}) \mbox{ (resp. } d(x,y) \ge d_{M^2_{\kappa}}(\bar{x},\bar{y})\mbox{)},\]
where  $\bar{x},\bar{y} \in \bar{\Delta}$ are the corresponding points of $x$ and $y$, i.e., if $x = (1-t)x_i + tx_j$ then $\bar{x} = (1-t)\bar{x}_i + t\bar{x}_j$.

A {\it CAT$(\kappa)$ space} (also known as a space of curvature bounded above by $\kappa$ in the sense of Alexandrov) is a geodesic space for which every geodesic triangle satisfies the CAT$(\kappa)$ inequality. Any CAT$(0)$ space (and so any CAT$(\kappa)$ space) is Busemann convex.

A geodesic metric space is said to have  curvature bounded below by $\kappa$ in the sense of Alexandrov (denoted by {\it CBB$(\kappa)$}) if every geodesic triangle satisfies the reversed CAT$(\kappa)$ inequality. If $X$ is a CBB$(\kappa)$ space, then the direct product $X \times M_\kappa^2$ is a CBB$(\kappa)$ space with the metric
\begin{equation} \label{metric-direct-product}
d\left((x,a),(y,b)\right)^2 = d_X(x,y)^2 + d_{M_\kappa^2}(a,b)^2.
\end{equation}
Other properties of spaces with curvature bounded above or below and equivalent definitions can be found in \cite{Bri99,Bur01}.

Let $(X,d)$ be a metric space. Taking $x \in X$ and $r > 0$ we denote the closed ball centered at $x$ with radius $r$ by $B(z,r).$ Given $C$ a nonempty subset of $X$, the {\it distance of a point} $x \in X$ to $C$ is $\mbox{dist}(x,C) = \inf\{d(x,c) : c \in C\}.$ If $B$ and $C$ are nonempty subsets of $X$, one defines the {\it Pompeiu-Hausdorff distance} as
\[H(B,C) = \max\left\{\sup_{b \in B}\text{dist}(b,C), \sup_{c \in C}\text{dist}(c,B)\right\}.\]
The {\it metric projection} $P_C$ onto $C$ is the mapping
\[P_C(x)=\{ c \in C : d(x,c)=\mbox{dist}(x,C)\}, \quad \text{for every } x\in X.\]
In any CAT$(0)$ space the metric projection onto a convex and complete subset is a singlevalued and nonexpansive (that is, $1$-Lipschitz) mapping.

A metric space $X$ is {\it hyperconvex} if $\bigcap_{\alpha}B(x_\alpha,r_\alpha) \ne \emptyset$ for every collection of points $\{x_\alpha\}$ in $X$ and positive numbers $\{r_\alpha\}$ such that $d(x_\alpha,x_\beta) \le r_\alpha + r_\beta$ for any $\alpha, \beta$. A subset $E$ of a metric space $X$ is called {\it externally hyperconvex} (with respect to $X$) if given any family $\{x_{\alpha}\}$ of points in $X$ and any family $\{r_{\alpha}\}$ of real numbers satisfying
\[d(x_{\alpha},x_{\beta}) \le r_{\alpha} + r_{\beta} \quad \mbox{and} \quad \mbox{dist}(x_\alpha, E) \le r_{\alpha},\] 
it follows that $\bigcap_{\alpha}B(x_{\alpha},r_{\alpha}) \cap E \ne \emptyset$. For a more detailed discussion on hyperconvex metric spaces, see \cite{EspKha01}.

Let $(X,d_X)$, $(Y,d_Y)$ be metric spaces, $A \subseteq X$ nonempty and consider $C(A,Y)$ the family of bounded and continuous mappings from $A$ to $Y$. For each $f,g \in C(A,Y)$, let $d_\infty(f,g)=\sup_{x \in A}d_Y(f(x),g(x))$. Endowed with the supremum distance $d_\infty$, $C(A,Y)$ is a metric space which is complete if $Y$ is complete. We consider two subsets of $C(A,Y)$: $\mathcal{L}(A,Y)$ which includes all bounded Lipschitz mappings from $A$ to $Y$ and is not necessarily a closed subset of $C(A,Y)$ and $\mathcal{N}(A,Y)$ which stands for the family of all bounded nonexpansive mappings defined from $A$ to $Y$ and which is closed in $C(A,Y)$.

For $f \in \mathcal{L}(A,Y)$ we denote the {\it smallest Lipschitz constant} of $f$ on $B \subseteq A$ by $\text{Lip}(f, B)$. More precisely,
\[\text{Lip}(f,B) = \sup\left\{\frac{d_Y(f(x),f(y))}{d_X(x,y)} : x,y \in B, x \ne y\right\}.\]

For a set $C$, we denote by $\oldcal{P}(C)$ the family of all its subsets. We consider two multivalued extension mappings:
\begin{itemize}
\item $\Phi : \mathcal{N}(A,Y) \to \oldcal{P}\left(\mathcal{N}(X,Y)\right)$ which assigns to each nonexpansive mapping $f \in \mathcal{N}(A,Y)$ all its nonexpansive extensions $f' \in \mathcal{N}(X,Y)$. Note that in this case it may happen that $\text{Lip}(f,A) < \text{Lip}(f',X) \le 1$. 
\item $\Psi : \mathcal{L}(A,Y) \to \oldcal{P}\left(\mathcal{L}(X,Y)\right)$ which assigns to each Lipschitz mapping $f \in \mathcal{L}(A,Y)$ all its Lipschitz extensions $f' \in \mathcal{L}(X,Y)$ with $\text{Lip}(f,A) = \text{Lip}(f',X)$.
\end{itemize}

Recall that having two topological spaces $X$ and $Y$, a multivalued mapping $\Gamma : X \to \oldcal{P}(Y)$ is {\it lower semi-continuous} if for every open $V \subseteq Y$, the set $\{x \in X : \Gamma(x) \cap V \ne \emptyset\}$ is open in $X$. If $X$ and $Y$ are metric spaces, $\Gamma$ is {\it nonexpansive} if $H(\Gamma(x),\Gamma(y)) \le d_X(x,y)$ for every $x,y \in X$.

The classical Kirszbraun theorem was extended to geodesic metric spaces with lower and upper curvature bounds by Lang and Schroeder in \cite{LanSch97}. Later, Alexander, Kapovitch and Petrunin considered a different approach of the proof in \cite{AleKapPet11}.

\begin{theorem}[Lang, Schroeder \cite{LanSch97}] \label{thm-gen-Kirszbraun}
Let $\kappa \le 0$, $X$ a CBB$(\kappa)$ space and $Y$ a complete CAT$(\kappa)$ space. Suppose $A \subseteq X$ is nonempty and $f:A\to Y$ is nonexpansive. Then there exists a nonexpansive extension $f':X \to Y$ of $f$. 
\end{theorem}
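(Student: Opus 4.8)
The plan is to follow the classical Kirszbraun--Valentine strategy adapted to the curvature setting: reduce the global extension to a one-point extension by Zorn's lemma, recast the one-point step as the nonemptiness of an intersection of balls in $Y$, and establish this nonemptiness through the interplay between the lower curvature bound on $X$ and the upper curvature bound on $Y$. First I would order the set of nonexpansive maps $g : D \to Y$ with $A \subseteq D \subseteq X$ and $g|_A = f$ by extension. Every chain has an upper bound (the union of its members, which is again nonexpansive and defined on the union of the domains), so Zorn's lemma yields a maximal nonexpansive extension $g : D \to Y$. If $D = X$ we are done; otherwise I would fix $p \in X \setminus D$ and seek $y \in Y$ with $d_Y(y, g(x)) \le d_X(p, x)$ for all $x \in D$, since setting $g(p) = y$ would produce a strictly larger nonexpansive extension, contradicting maximality. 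Writing $r_x = d_X(p, x)$, this amounts to showing that
\[
\bigcap_{x \in D} B\bigl(g(x), r_x\bigr) \ne \emptyset.
\]

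The heart of the argument is the finite version: given $x_1, \dots, x_n \in D$ with images $y_i = g(x_i)$ satisfying $d_Y(y_i, y_j) \le d_X(x_i, x_j)$, and radii $r_i = d_X(p, x_i)$, one must show $\bigcap_{i=1}^n B(y_i, r_i) \ne \emptyset$. To this end I would minimize the convex, continuous, coercive function $\lambda(y) = \max_i d_Y(y, y_i)/r_i$ over the complete CAT$(\kappa)$ space $Y$ (recall that for $\kappa \le 0$ such a space is CAT$(0)$, so each $d_Y(\cdot, y_i)$ is convex); completeness together with convexity guarantees a minimizer $y_0$, and it suffices to show $\rho := \lambda(y_0) \le 1$. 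Suppose $\rho > 1$. Minimality forces $y_0$ to be \emph{balanced} relative to the active family $I = \{i : d_Y(y_0, y_i) = \rho r_i\}$, in the sense that no direction at $y_0$ simultaneously decreases all the distances $d_Y(\cdot, y_i)$, $i \in I$; a separation argument then yields weights $\lambda_i \ge 0$, $i \in I$, with $\sum_i \lambda_i = 1$ placing $y_0$ in angular equilibrium among the $y_i$. I would then compare the active configuration $(y_0, \{y_i\}_{i \in I})$ in $Y$ and $(p, \{x_i\}_{i \in I})$ in $X$ against configurations in the model surface $M^2_\kappa$: the CAT$(\kappa)$ inequality controls the $Y$-distances from above, the CBB$(\kappa)$ inequality controls the $X$-distances from below, and combining these with the balancing condition should force $\rho \le 1$, the desired contradiction.

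To pass from the finite intersection property to the full intersection I would exploit convexity and completeness of $Y$. Each $B(g(x), r_x)$ is closed, bounded and convex, and the finite case shows that the family has the finite intersection property. Minimizing the convex lower semicontinuous function $h(y) = \sup_{x \in D} d_Y(y, g(x))/r_x$, equivalently taking circumcenters of the nonempty finite intersections and invoking the uniform convexity of complete CAT$(\kappa)$ spaces to see that these centers form a Cauchy net, produces a point in the total intersection whose $h$-value does not exceed the supremum of the finite values, namely $\le 1$. This furnishes the one-point extension and closes the Zorn argument, yielding the global nonexpansive extension $f'$.

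The main obstacle is the comparison step in the finite lemma, namely deriving $\rho \le 1$ from the balancing condition. This is where both curvature hypotheses and the sign $\kappa \le 0$ are genuinely used, since one must simultaneously push the $Y$-side distances down (CAT$(\kappa)$) and lift the $X$-side distances up (CBB$(\kappa)$) inside $M^2_\kappa$ and verify that the relevant law-of-cosines expressions have the correct monotonicity in $\kappa$. A convenient device here is that, by the stated closure property, $X \times M^2_\kappa$ is again CBB$(\kappa)$, which can be used to encode the radii $r_i$ geometrically and so reduce the balancing estimate to a clean computation in the model space.
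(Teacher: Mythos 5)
You should first note that the paper does not prove this statement at all: Theorem \ref{thm-gen-Kirszbraun} is imported verbatim from Lang and Schroeder \cite{LanSch97} (with an alternative proof in \cite{AleKapPet11}), so the only meaningful comparison is with the original proof. At the level of architecture your outline tracks \cite{LanSch97} faithfully: Zorn's lemma reduces everything to a one-point extension; the one-point step becomes the nonemptiness of $\bigcap_{x\in D} B(g(x),d_X(p,x))$; the finite case is attacked by minimizing $\lambda(y)=\max_i d_Y(y,y_i)/r_i$ and extracting an equilibrium condition at the minimizer; and the passage from finite to arbitrary intersections is unproblematic, since for $\kappa\le 0$ the sets $\bigcap_{x\in F}B(g(x),r_x)$ form a directed decreasing family of nonempty bounded closed convex subsets of a complete CAT$(0)$ space, and projecting a fixed point onto them yields a Cauchy net (uniform convexity) whose limit lies in the total intersection.

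The genuine gap is that the heart of the theorem is asserted rather than proved, and you say so yourself: ``combining these with the balancing condition \emph{should} force $\rho\le 1$.'' Two specific pieces are missing. First, the existence of the weights $\lambda_i$: in a metric space there is no linear structure, so ``a separation argument'' is not available off the shelf; the directions at $y_0$ form a CAT$(1)$ space of directions, and producing a convex-combination equilibrium from the non-existence of a direction decreasing all active distances is a genuine lemma that Lang and Schroeder must prove, not a formality (this is where much of the real work in \cite{LanSch97} sits, especially without local compactness). Second, the comparison computation itself: one must convert the CAT$(\kappa)$ hypothesis into upper bounds on the angles at $y_0$, the CBB$(\kappa)$ hypothesis into lower bounds on the comparison angles at $p$, and chain inequalities of the shape $\sum_{i,j}\lambda_i\lambda_j\cos\tilde{\angle}_\kappa(\cdot)$ through $d_Y(y_i,y_j)\le d_X(x_i,x_j)$, using $\rho>1$ and the monotonicity of the $\kappa$-law of cosines to reach a strict contradiction. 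Nothing in your sketch verifies that these two one-sided bounds are compatible in the required direction, and that verification \emph{is} the theorem. Incidentally, the closing suggestion that the product $X\times M^2_\kappa$ ``encodes the radii'' is not how \cite{LanSch97} handle them; that product trick is used in this paper (proof of Lemma \ref{lemma-lsc-Phi}) for a homotopy parameter, and as stated it does not obviously reduce the balancing estimate to a model-space computation. As it stands, then, your proposal is a correct and well-informed plan whose decisive step remains open.
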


Although the result can be also stated when $\kappa > 0$ with an appropriate boundedness condition on the set $f(A)$, here we are only concerned with the case $\kappa \le 0$.

For $\kappa = 0$ the result can be generalized to any arbitrary Lipschitz constant by scaling the metric on either $X$ or $Y$ and so we may consider both mappings $\Phi$ and $\Psi$. When $\kappa < 0$, the same argument can be applied for Lipschitz constants greater than $1$. However, for Lipschitz constants strictly less than $1$, we cannot expect the result to hold true. Suppose one could extend all mappings $f : A \subseteq \mathbb{H}^2 \to \mathbb{H}^2$ with $\text{Lip}(f,A) < 1$ while keeping the same Lipschitz constant. Taking $\kappa \in (-1,0)$, this implies that we can extend all nonexpansive mappings defined on $A \subseteq \mathbb{H}^2$ with values in $M_\kappa^2$ to nonexpansive mappings on $\mathbb{H}^2.$ But this means that $M_\kappa^2$ is a CAT$(-1)$ space (see Proposition 6.2 in \cite{LanSch97}), a contradiction. Since in this work we rely on Theorem \ref{thm-gen-Kirszbraun} in order to obtain our continuity results, for the case $\kappa < 0$ we will only study the mapping $\Phi$.

However, if the target space is an $\mathbb{R}$-tree, then it was proved in \cite{LanSch97} that we not only can extend mappings with arbitrary Lipschitz constant, but we can also drop the curvature assumption on the source space. 

\begin{theorem}[Lang, Schroeder \cite{LanSch97}] \label{thm-R-trees}
Let $X$ be a metric space and $Y$ a complete $\mathbb{R}$-tree. Suppose $A \subseteq X$ is nonempty and $f:A\to Y$ is a Lipschitz mapping. Then there exists a Lipschitz extension $f':X \to Y$ of $f$ with $\emph{Lip}(f',X) = \emph{Lip}(f,A)$.
\end{theorem}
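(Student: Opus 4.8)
The plan is to reduce the statement to the nonexpansive case and then exploit the hyperconvexity of complete $\mathbb{R}$-trees. First I would dispose of the degenerate situation $\text{Lip}(f,A) = 0$, in which $f$ is constant and any constant map extends it with the same (zero) Lipschitz constant. Assuming $L := \text{Lip}(f,A) > 0$, I would rescale the source: equipping $X$ with the metric $L\,d_X$ leaves $X$ a metric space and renders $f$ nonexpansive, since $d_Y(f(a),f(a')) \le L\,d_X(a,a')$ for all $a,a' \in A$. A nonexpansive extension $f'$ with respect to $L\,d_X$ then satisfies $d_Y(f'(x),f'(x')) \le L\,d_X(x,x')$, i.e. $\text{Lip}(f',X) \le L$ in the original metric; and since $f'$ restricts to $f$ on $A$ one automatically has $\text{Lip}(f',X) \ge \text{Lip}(f,A) = L$, yielding the desired equality. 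Thus it suffices to produce a nonexpansive extension.

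For that, the essential fact is that a complete $\mathbb{R}$-tree is hyperconvex (see \cite{EspKha01}). Granting this, one could simply invoke the Aronszajn--Panitchpakdi characterization recalled in the introduction, but to keep the mechanism transparent I would instead run a Zorn's lemma argument on the poset of nonexpansive extensions of $f$ to intermediate sets $D$ with $A \subseteq D \subseteq X$, ordered by extension; the union of a chain of such extensions is again a nonexpansive extension, so a maximal element $g$, defined on some $D$, exists. The key step is showing $D = X$. If some $x \in X \setminus D$ existed, extending $g$ nonexpansively to $D \cup \{x\}$ would require a point $y \in Y$ with $d_Y(g(b),y) \le d_X(b,x)$ for every $b \in D$, i.e. $y \in \bigcap_{b \in D} B\left(g(b), d_X(b,x)\right)$. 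Writing $r_b := d_X(b,x)$, nonexpansivity of $g$ and the triangle inequality give $d_Y(g(b),g(b')) \le d_X(b,b') \le r_b + r_{b'}$, so hyperconvexity of $Y$ forces this intersection to be nonempty. Any point of it extends $g$ nonexpansively, contradicting maximality; hence $D = X$ and $g$ is the required extension.

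The main obstacle I anticipate is precisely the hyperconvexity of complete $\mathbb{R}$-trees, and this is where completeness is used: one must guarantee that the possibly infinite family of balls $B\left(g(b), d_X(b,x)\right)$ has nonempty intersection whenever the pairwise radius condition holds. A self-contained verification would exploit the tree structure directly, using the unique geodesics and the fact that in an $\mathbb{R}$-tree the intersection of any two balls with the compatible radius condition is again a nonempty ball-like (in fact closed and convex) set, so that the finite intersections have the finite intersection property and are nested closed bounded sets whose total intersection is nonempty by completeness. Either routing through \cite{EspKha01} or through this direct tree argument closes the one-point extension step, and the scaling reduction of the first paragraph then delivers the general Lipschitz statement.
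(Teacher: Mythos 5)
Your proposal is correct and follows essentially the paper's own route: the paper obtains Theorem \ref{thm-R-trees} precisely as a consequence of the Aronszajn--Panitchpakdi hyperconvex extension theorem (Theorem \ref{thm-hyp}) together with Kirk's result \cite{Kir88} that complete $\mathbb{R}$-trees are hyperconvex, and your Zorn's-lemma one-point-extension argument is just the standard proof of Theorem \ref{thm-hyp} inlined. The only cosmetic difference is your initial rescaling to the nonexpansive case, which is harmless but unnecessary, since Theorem \ref{thm-hyp} (and equally your Zorn argument, run with radii $L\,d_X(b,x)$) already handles arbitrary Lipschitz constants directly.
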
 

Theorem \ref{thm-R-trees} is a consequence of the following extension theorem proved for hyperconvex metric spaces by Aronszajn and Panitchpakdi in \cite{AroPan56}, where it is actually shown that this property characterizes hyperconvexity. Note that any complete $\mathbb{R}$-tree is a hyperconvex metric space (see \cite{Kir88}).

\begin{theorem}[Aronszajn, Panitchpakdi \cite{AroPan56}] \label{thm-hyp}
Let $X$ be a metric space and $Y$ a hyperconvex metric space. Suppose $A \subseteq X$ is nonempty and $f:A\to Y$ is a Lipschitz mapping. Then there exists a Lipschitz extension $f':X \to Y$ of $f$ with $\emph{Lip}(f',X) = \emph{Lip}(f,A)$.
\end{theorem}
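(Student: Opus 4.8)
The plan is to construct $f'$ by a one-point-at-a-time extension argument organized through Zorn's lemma, with the entire difficulty concentrated in a single one-point extension step that is dispatched directly by the defining property of hyperconvexity. Set $L = \text{Lip}(f,A)$ (if $L = 0$ then $f$ is constant and we simply extend by the same value). Consider the collection of all pairs $(B,g)$ with $A \subseteq B \subseteq X$ and $g : B \to Y$ an $L$-Lipschitz extension of $f$, ordered by $(B_1,g_1) \preceq (B_2,g_2)$ whenever $B_1 \subseteq B_2$ and $g_2|_{B_1} = g_1$. This poset contains $(A,f)$, and every chain $\{(B_i,g_i)\}$ has an upper bound obtained by taking $B = \bigcup_i B_i$ and letting $g$ agree with each $g_i$ on $B_i$; this is well defined by compatibility within the chain and is again $L$-Lipschitz. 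By Zorn's lemma there is a maximal element $(B^*,g^*)$.

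The core claim is that $B^* = X$. Suppose not, and fix $x \in X \setminus B^*$. I would then seek a value $y \in Y$ that can serve as $g^*(x)$ without breaking the $L$-Lipschitz bound, i.e.\ a point $y$ with $d_Y(y, g^*(b)) \le L\, d_X(x,b)$ for every $b \in B^*$; equivalently, $y$ must lie in the intersection of the closed balls $B(g^*(b), L\, d_X(x,b))$, $b \in B^*$. To invoke hyperconvexity of $Y$ I must check that these balls are pairwise compatible, namely $d_Y(g^*(b),g^*(b')) \le L\, d_X(x,b) + L\, d_X(x,b')$ for all $b,b' \in B^*$. This is immediate from the $L$-Lipschitz property of $g^*$ and the triangle inequality in $X$:
\[ d_Y\big(g^*(b),g^*(b')\big) \le L\, d_X(b,b') \le L\big(d_X(b,x) + d_X(x,b')\big) = L\, d_X(x,b) + L\, d_X(x,b'). \]
Hyperconvexity of $Y$ (applied to this arbitrary family of balls) then forces $\bigcap_{b \in B^*} B(g^*(b), L\, d_X(x,b)) \ne \emptyset$; choosing $y$ in this intersection and setting $g^*(x) = y$ yields a strictly larger $L$-Lipschitz extension, contradicting maximality. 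Hence $B^* = X$ and $f' := g^*$ is the desired map.

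Finally I would record that $\text{Lip}(f',X) = L$: the construction keeps the Lipschitz constant at most $L$ at every stage, so $\text{Lip}(f',X) \le L$, while $\text{Lip}(f',X) \ge \text{Lip}(f,A) = L$ since $f'$ restricts to $f$ on $A$. The only genuinely substantive point is the compatibility inequality feeding into hyperconvexity, and as displayed above it is a one-line consequence of the triangle inequality; the remainder is routine Zorn's-lemma bookkeeping (verifying that unions of chains give well-defined $L$-Lipschitz maps). I therefore expect no serious obstacle beyond organizing the transfinite step cleanly.
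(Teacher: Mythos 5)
Your proof is correct. The paper does not actually prove this statement --- Theorem~\ref{thm-hyp} is quoted from Aronszajn and Panitchpakdi \cite{AroPan56} --- and your argument is precisely the classical one from that source: Zorn's lemma reduces the problem to a single one-point extension step, where the pairwise condition $d_Y(g^*(b),g^*(b')) \le L\,d_X(x,b) + L\,d_X(x,b')$ follows from the triangle inequality and hyperconvexity then yields a point in $\bigcap_{b \in B^*} B\left(g^*(b), L\,d_X(x,b)\right)$. Two small points worth recording: the paper's definition of hyperconvexity asks for \emph{positive} radii, which your step satisfies since $x \notin B^*$ forces $d_X(x,b) > 0$ (and you rightly dispatched $L = 0$ separately); and the same one-point, transfinite extension technique is exactly what the paper itself deploys in Section~\ref{hyperconvexity} (Lemmas~\ref{set-nonexp} and~\ref{exthyp}), so your argument is fully consistent with the paper's methods.
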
 

\section{Lower semicontinuity of the multivalued extension mappings and continuous selections} \label{sect-lsc-cont-sel}

We begin this section by showing that, when considering appropriate curvature bounds on $X$ and $Y$, both mappings $\Phi$ and $\Psi$ are lower semi-continuous which is an immediate consequence of Lemmas \ref{lemma-lsc-Phi} and \ref{lemma-lsc-Psi}, respectively.  The proof strategy follows the one used for Hilbert spaces in \cite{Kop12}. 

\begin{lemma}\label{lemma-lsc-Phi}
Let $\kappa \le 0$, $X$ a CBB$(\kappa)$ space, $Y$ a complete CAT$(\kappa)$ space and $A \subseteq X$ nonempty. Let $f \in \mathcal{N}(X,Y)$. Then for every $\varepsilon > 0$ there exists $\delta > 0$ such that every $g \in \mathcal{N}(A,Y)$ with $\sup_{a \in A}d_Y(f(a),g(a)) < \delta$ admits an extension $g' \in  \mathcal{N}(X,Y)$ such that $d_\infty(f,g') \le \varepsilon$.
\end{lemma}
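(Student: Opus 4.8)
The plan is to start from an arbitrary nonexpansive extension of $g$ furnished by Theorem \ref{thm-gen-Kirszbraun}, and then to deform it back toward $f$, relying on the nonpositive curvature of $Y$ to keep the deformation nonexpansive. Since $X$ is CBB$(\kappa)$ and $Y$ is a complete CAT$(\kappa)$ space, Theorem \ref{thm-gen-Kirszbraun} provides $\bar g \in \mathcal{N}(X,Y)$ with $\bar g|_A = g$. On $A$ we have $d_Y(f(a),\bar g(a)) = d_Y(f(a),g(a)) < \delta$, and since both $f$ and $\bar g$ are nonexpansive, for any $x$ with $\mathrm{dist}(x,A)\le\rho$ one picks $a\in A$ nearly realizing the distance and gets $d_Y(f(x),\bar g(x)) \le 2\,\mathrm{dist}(x,A)+\delta \le 2\rho+\delta$; away from $A$ no such control is available, which is precisely why the deformation must erase $\bar g$ outside a thin collar of $A$.

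I would then define $g'$ by interpolating along geodesics in $Y$ between $\bar g$ and $f$, governed by the $1$-Lipschitz function $x\mapsto\mathrm{dist}(x,A)$: informally, $g'(x) = (1-s(x))\,\bar g(x) + s(x)\,f(x)$, where the geodesic parameter $s(x)\in[0,1]$ depends only on $\mathrm{dist}(x,A)$, equals $0$ on $A$, and equals $1$ once $\mathrm{dist}(x,A)$ exceeds the collar width. With this choice $g'|_A = \bar g|_A = g$, so $g'$ is genuinely an extension, while $d_Y(f(x),g'(x)) = (1-s(x))\,d_Y(f(x),\bar g(x))$ is nonzero only inside the collar, where the gap $d_Y(f(x),\bar g(x))$ is itself small by the previous step; choosing the collar width and $\delta$ of order $\varepsilon$ then forces $d_\infty(f,g')\le\varepsilon$. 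Thus the extension property and the approximation bound come out essentially for free and fix the required $\delta$.

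The main obstacle is showing that $g'\in\mathcal{N}(X,Y)$. For a \emph{fixed} parameter $t$, the map $x\mapsto(1-t)\bar g(x)+t\,f(x)$ is nonexpansive because $Y$ is Busemann convex, so the distance between two geodesics is a convex function of the parameter; but here the parameter $s(x)$ varies with $x$, and a crude triangle-inequality split produces an error term of the form $|s(x)-s(y)|\,d_Y(f(y),\bar g(y))$ that is \emph{positive} and, handled naively, only yields a Lipschitz constant bounded away from $1$. Reconciling the approximation requirement, which wants a rapid transition and hence large $|s(x)-s(y)|$, with the nonexpansiveness requirement, which wants a slow transition, is the real difficulty, and it is here that the curvature assumptions must enter in an essential, nonlocal way rather than through the triangle inequality alone. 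My plan for this step is to pass to the product space $X\times M_\kappa^2$, which is again CBB$(\kappa)$ by \eqref{metric-direct-product}, to encode the transition parameter $s$ through the extra $M_\kappa^2$-coordinate so that the interpolation is realized as a nonexpansive map out of a CBB$(\kappa)$ configuration, and then to run a comparison-triangle estimate in $M_\kappa^2$ against the CAT$(\kappa)$ target $Y$, combined with the collar bound $d_Y(f,\bar g)\le 2\rho+\delta$, in order to absorb the error term. I expect verifying this single inequality, namely that the curved geometry exactly compensates the variation of $s$, to be the technically decisive and most delicate part of the argument.
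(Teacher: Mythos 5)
Your plan has a genuine gap, and it sits exactly where you locate it: the deformation step cannot be made nonexpansive, and the product-space rescue you sketch does not repair it. First, once you fix an arbitrary Kirszbraun extension $\bar g$ of $g$ and insist on $g'(x) = (1-s(x))\,\bar g(x) + s(x)\,f(x)$ with $s$ a function of $\mathrm{dist}(x,A)$ vanishing on $A$ and equal to $1$ beyond a collar of width $\rho \sim \varepsilon$, nonexpansiveness fails already in $X = Y = \mathbb{R}$, where every curvature hypothesis holds and all comparison estimates are equalities, so there is no ``curved geometry'' left to absorb the error term. Concretely, take $X = [-2,2]$, $A = [-2,0]$, $f(x) = x$, $g = f|_A + c$ with $0 < c < \delta$, and the adversarial extension $\bar g(x) = c - x$ for $x \ge 0$: if $g' = f$ past the collar and $g'$ is $1$-Lipschitz, then $g'(x) \ge x$ on $[0,\rho]$, which combined with $g'(x) = c(1-s(x)) + x(2s(x)-1)$ forces $s(x) = 1$ for every $x > c/2$. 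So the transition must complete within distance $c/2 < \delta/2$ of $A$ --- it cannot occupy a collar of width of order $\varepsilon$, and its profile is not yours to choose; the error term $|s(x)-s(y)|\,d_Y(f(y),\bar g(y))$ you flag is real, not an artifact of a crude split. Second, the rescue fails for a structural reason: encoding the varying parameter $s$ in the $M_\kappa^2$-coordinate means recovering $g'$ by evaluating a nonexpansive map $h'$ along the graph $x \mapsto (x,\sigma(x))$, and by \eqref{metric-direct-product} that embedding has Lipschitz constant $\sqrt{1+\mathrm{Lip}(\sigma)^2} > 1$, so nonexpansiveness upstairs only descends to a $\sqrt{1+L^2}$-Lipschitz bound on $X$. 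Evaluation in the product preserves nonexpansiveness only on a slice with \emph{fixed} second coordinate --- which is incompatible with a varying transition parameter.

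The paper's proof avoids both problems by never extending $g$ first and never interpolating. Pick $z \in Y$ and $M \ge 1$ with $\sup_{x\in X} d_Y(z,f(x)) \le M$, set $\delta = \varepsilon^2/(8M)$ (note the quadratic dependence: your ``$\delta$ of order $\varepsilon$'' is not enough), and define $h$ on $X \times \{p_0\} \cup A \times \{p_\varepsilon\}$, where $p_0, p_\varepsilon \in M_\kappa^2$ are at distance $\varepsilon$, by $h = f$ on the first slice and $h = g$ on the second. The only nontrivial distances are the cross terms, and there the Pythagorean cushion does all the work: $d_Y(f(x),g(a))^2 \le \left(d_X(x,a) + \delta\right)^2 + 4\delta M - 2\delta d_X(x,a) \le d_X(x,a)^2 + \delta^2 + 4\delta M < d_X(x,a)^2 + \varepsilon^2$, which is exactly the squared product distance between $(x,p_0)$ and $(a,p_\varepsilon)$. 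Since $X \times M_\kappa^2$ is CBB$(\kappa)$, Theorem \ref{thm-gen-Kirszbraun} extends $h$ to a nonexpansive $h'$ on all of $X \times M_\kappa^2$, and one sets $g' = h'(\cdot,p_\varepsilon)$: this is nonexpansive because the slice is an isometric copy of $X$, it extends $g$, and $d_Y(f(x),g'(x)) = d_Y\left(h'(x,p_0),h'(x,p_\varepsilon)\right) \le \varepsilon$ --- the uniform closeness that your collar was supposed to manufacture falls out of the slice separation for free. So your instinct about the ambient space $X \times M_\kappa^2$ was the right one, but the object to extend is the two-slice map tying $g$ directly to $f$, not a pointwise-deformed version of an arbitrary $\bar g$.
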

\begin{proof}
Since $f$ is a bounded mapping there exists $z \in Y$ and $M \ge 1$ such that $\sup_{x \in X}d_Y(z,f(x)) \le M$. Let $\varepsilon \in (0,1)$ and take $\delta = \varepsilon^2/\left(8M\right)$. Suppose $g \in \mathcal{N}(A,Y)$ with $\sup_{a \in A}d_Y(f(a),g(a)) < \delta$. 

Let $\kappa = 0$. Define the mapping $h : X \times \{(0,0)\} \cup A \times \{(0,\varepsilon)\} \to Y$ by: for  $x \in X$, $h\left(x,(0,0)\right) = f(x)$ and for $a \in A$, $h\left(a,(0,\varepsilon)\right) = g(a)$. 

Recalling (\ref{metric-direct-product}), for $x \in X$ and $a \in A$,
\begin{align*}
& d_Y\left(h\left(x,(0,0)\right),h\left(a,(0,\varepsilon)\right)\right)^2 = d_Y\left(f(x),g(a)\right)^2 \\
& \quad \le \left(d_Y\left(f(x),f(a)\right) + d_Y\left(f(a),g(a)\right)\right)^2 \\
& \quad \le d_X(x,a)^2 + \delta^2 + 4\delta M < d_X(x,a)^2 + \varepsilon^2 = d\left((x,(0,0)),(a,(0,\varepsilon))\right)^2. 
\end{align*}
This shows that $h$ is nonexpansive since both $f$ and $g$ are nonexpansive on $X$ and $A$, respectively.  Since $X \times \mathbb{R}^2$ is a CBB$(0)$ space, using Theorem \ref{thm-gen-Kirszbraun} we can extend $h$ to a nonexpansive mapping $h' : X \times \mathbb{R}^2 \to Y$. Define $g' : X \to Y$ by $g'(x) = h'\left(x,(0,\varepsilon)\right)$. Clearly, $g'$ is nonexpansive and coincides with $g$ on $A$. Moreover, for each $x \in X$,
\[d_Y(f(x),g'(x)) = d_Y\left(h'\left(x,(0,0)\right),h'\left(x,(0,\varepsilon)\right)\right) \le d\left((x,(0,0)),(x,(0,\varepsilon))\right) = \varepsilon.\]
This also shows that $g'$ is bounded.

When $\kappa < 0$, we apply the same argument to the nonexpansive mapping 
\[h : X \times \{(0,0,1)\} \cup A \times \left\{\left(0,\sinh\left(\sqrt{-\kappa}\varepsilon\right), \cosh\left(\sqrt{-\kappa}\varepsilon\right)\right)\right\} \to Y\] 
defined as: for  $x \in X$, $h\left(x,(0,0,1)\right) = f(x)$ and for $a \in A$, $h\left(a,\left(0,\sinh\left(\sqrt{-\kappa}\varepsilon\right), \cosh\left(\sqrt{-\kappa}\varepsilon\right)\right)\right) = g(a)$ which can be extended to a nonexpansive mapping $h' : X \times M_\kappa^2 \to Y$ (recall that $X \times M_\kappa^2$ is a CBB$(\kappa)$ space).
\end{proof}

\begin{lemma}\label{lemma-lsc-Psi}
Let $X$ be a CBB$(0)$ space, $Y$ a complete CAT$(0)$ space and $A \subseteq X$ nonempty. Let $f \in \mathcal{L}(X,Y)$ with $\emph{Lip}(f,A) =  \emph{Lip}(f,X)$. Then for every $\varepsilon > 0$ there exists $\delta > 0$ such that every $g \in \mathcal{L}(A,Y)$ for which $\sup_{a \in A}d_Y(f(a),g(a)) < \delta$ admits an extension $g' \in  \mathcal{L}(X,Y)$  with $\emph{Lip}(g,A) = \emph{Lip}(g',X)$ and $d_\infty(f,g') \le \varepsilon$.
\end{lemma}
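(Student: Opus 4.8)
The plan is to reduce the statement to Lemma~\ref{lemma-lsc-Phi} by rescaling the metric of $Y$, the point being that for $\kappa=0$ one may normalise any prescribed Lipschitz constant to $1$. Throughout write $L=\text{Lip}(f,A)=\text{Lip}(f,X)$ and $L_g=\text{Lip}(g,A)$; if $L=0$ then $f$ is constant and the claim is immediate, so assume $L>0$. Since $f$ is bounded, fix $z\in Y$ and $M\ge 1$ with $f(X)\subseteq B(z,M)$. The difficulty, compared with Lemma~\ref{lemma-lsc-Phi}, is that the extension must have Lipschitz constant \emph{exactly} $L_g$, whereas a naive gluing of $f$ and $g$ would only control it by $\max\{L,L_g\}$; the case $L_g<L$ is where the work lies, and it is precisely here that the hypothesis $\text{Lip}(f,A)=\text{Lip}(f,X)$ will be used.

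First I would record an a priori estimate showing that closeness on $A$ forces $L_g$ to be close to $L$ from below. For any $\rho>0$ and any $a,a'\in A$ with $d_X(a,a')\ge\rho$ one has $d_Y(g(a),g(a'))\ge d_Y(f(a),f(a'))-2\delta$, whence $L_g\ge L^{(\rho)}-2\delta/\rho$, where $L^{(\rho)}$ denotes the supremum of the difference quotients of $f$ over pairs of $A$ at distance at least $\rho$. Since $\sup_{\rho>0}L^{(\rho)}=\text{Lip}(f,A)=L$, choosing $\rho$ and then $\delta$ suitably small shows that for every prescribed $\eta>0$ there is $\delta>0$ for which $L_g\ge L-\eta$; in particular $L_g\ge L/2>0$ once $\delta$ is small.

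Next I would manufacture an $L_g$-Lipschitz ``guide'' $\tilde f:X\to Y$ that is uniformly close to $f$. If $L_g\ge L$, simply take $\tilde f=f$. If $L_g<L$, put $s=L_g/L\in(0,1)$ and let $\tilde f(x)=(1-s)z+sf(x)$ be the point at parameter $s$ on the geodesic from $z$ to $f(x)$. By Busemann convexity of the CAT$(0)$ space $Y$, $d_Y(\tilde f(x),\tilde f(x'))\le s\,d_Y(f(x),f(x'))\le sL\,d_X(x,x')=L_g\,d_X(x,x')$, where the equality $sL=L_g$ uses $\text{Lip}(f,X)=L$; thus $\text{Lip}(\tilde f,X)\le L_g$ and $\tilde f(X)\subseteq B(z,M)$. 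Moreover $d_Y(f(x),\tilde f(x))=(1-s)d_Y(z,f(x))\le(1-s)M$, which by the estimate of the previous step is at most $\varepsilon/2$ once $\delta$ is small enough.

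Finally I would invoke Lemma~\ref{lemma-lsc-Phi}. Replacing $d_Y$ by $d_Y/L_g$ keeps $Y$ a complete CAT$(0)$ space and turns both $\tilde f$ and $g$ into bounded nonexpansive maps that are $\bigl((1-s)M+\delta\bigr)/L_g$-close on $A$. Applying Lemma~\ref{lemma-lsc-Phi} to the fixed nonexpansive map $\tilde f$ with tolerance $\varepsilon/(2L_g)$ yields, for $\delta$ small, a nonexpansive (in the rescaled metric) extension $g'$ of $g$ with $\sup_{x\in X}d_Y(\tilde f(x),g'(x))\le\varepsilon/2$. Unscaling, $g'$ is $L_g$-Lipschitz; since it extends $g$ we get $\text{Lip}(g',X)\ge\text{Lip}(g,A)=L_g$, hence $\text{Lip}(g',X)=L_g$, and $d_\infty(f,g')\le\sup_X d_Y(f,\tilde f)+\sup_X d_Y(\tilde f,g')\le\varepsilon/2+\varepsilon/2=\varepsilon$. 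The only point requiring care is the uniformity of $\delta$ in $L_g$: because $L_g$ is bounded below by $L/2$, the tolerance $\varepsilon/(2L_g)$ and the diameter bound $M/L_g$ fed into Lemma~\ref{lemma-lsc-Phi} stay in a fixed range, so a single $\delta$ works for all admissible $g$. I expect the construction of $\tilde f$ — and the observation that its Lipschitz constant can be pushed down exactly to $L_g$ only because $\text{Lip}(f,X)$ does not exceed $\text{Lip}(f,A)$ — to be the crux of the argument.
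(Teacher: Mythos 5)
Your reduction is sound, and essentially coincides with the paper's own argument, in the regime where $\text{Lip}(g,A)$ is comparable to $L$: the shrunk guide $\tilde f=(1-s)z+sf$ is precisely the paper's device (the paper fixes $s$ close to $1$ in advance rather than taking $s=L_g/L$, and runs the Kirszbraun product construction directly instead of rescaling into Lemma \ref{lemma-lsc-Phi}, but that difference is cosmetic). The genuine gap is in your final uniformity paragraph. From closeness to $f$ you derived a lower bound $L_g\ge L/2$, and that does keep the rescaled diameter bound $M/L_g$ in a fixed range; but nothing bounds $L_g$ from \emph{above} — a map $g$ within $\delta$ of $f$ on $A$ can oscillate with tiny amplitude and arbitrarily large Lipschitz constant. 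Hence the rescaled tolerance $\varepsilon/(2L_g)$ is \emph{not} bounded away from $0$, and your claim that the inputs to Lemma \ref{lemma-lsc-Phi} "stay in a fixed range" fails. Concretely, the lemma as stated fixes the map and the tolerance before producing $\delta$, and its proof yields the threshold $\hat\delta=\hat\varepsilon^2/(8\hat M)$ under the normalizations $\hat M\ge 1$, $\hat\varepsilon\in(0,1)$; once $L_g>M$ one must replace $\hat M=M/L_g$ by $1$, so $\hat\delta=\varepsilon^2/(32L_g^2)$ and the admissible original closeness becomes $\delta<\varepsilon^2/(32L_g)\to 0$ as $L_g\to\infty$. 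Since $\delta$ must be chosen before $g$ (hence before $L_g$) is known, the black-box citation breaks down exactly for $g$ with large Lipschitz constant. (Even in the bounded regime you are implicitly using the quantitative content of the lemma's proof, not its statement, since both $\tilde f$ and the rescaled metric vary with $g$.)

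This large-$L_g$ regime is precisely where the paper switches to a different mechanism: when $\text{Lip}(g,A)>2\,\text{Lip}(f,X)$ it glues $g$ on $A$ with $f$ on $\tilde A=\{x\in X:\text{dist}(x,A)\ge 2\delta/\text{Lip}(g,A)\}$, checks that the \emph{large} constant $L_g$ itself makes the glued map $L_g$-Lipschitz, extends with the same constant, and verifies $d_\infty(f,g')<\varepsilon$ by a triangle estimate — no shrinking or rescaling at all. Your single-mechanism approach can in fact be salvaged without a second case, but only by reproving Lemma \ref{lemma-lsc-Phi} in a scale-invariant quantitative form: the true requirement in its proof is $\hat\delta^2+4\hat M\hat\delta<\hat\varepsilon^2$, so the admissible threshold is of order $\min\{\hat\varepsilon,\hat\varepsilon^2/\hat M\}$, which scales linearly under $d_Y\mapsto d_Y/L_g$ exactly as the closeness $\delta/L_g$ does; the factors of $L_g$ then cancel and a single $\delta$ works for all $g$. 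As written, however, the step is a genuine gap. A smaller point: the constant case $L=0$ is not "immediate" — an extension of $g$ with constant $L_g$ need not stay near the constant value $y$ of $f$; the paper fixes this by composing an arbitrary same-constant extension with the nonexpansive projection onto $B(y,\varepsilon)$, and your argument needs the same (or a similar) device.
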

\begin{proof}
Let $\varepsilon \in (0,1)$. Suppose first that $f$ is constant and equal to some $y \in Y$. Let $\delta = \varepsilon$. Then having any extension $g_1$ of $g$ to $X$ with $\text{Lip}(g,A) =  \text{Lip}(g_1,X)$, we can take $g':X \to Y$, $g'(x) = P_{B(y,\varepsilon)} \circ g_1$. 

Assume now $f$ is not constant. Let $z \in Y$ and $M > 0$ such that 
\[\sup_{x \in X}d_Y(z,f(x)) \le M.\] 
Let $s \in (0,1)$ for which 
\[\frac{1-s}{s^2} < \frac{\varepsilon^2}{32M(4M + 1)}.\]
Since $\text{Lip}(f,A) =  \text{Lip}(f,X) > 0$, there exist $x_0,y_0 \in A$ such that $d_Y(f(x_0),f(y_0)) > s\text{Lip}(f,X)d_X(x_0,y_0)$. Take
\[\delta = \min\left\{\frac{d_Y(f(x_0),f(y_0)) - s\text{Lip}(f,X)d_X(x_0,y_0)}{2},\frac{\varepsilon^2s^2}{32(4M+1)}\right\}.\]
Let $g \in \mathcal{L}(A,Y)$ with $\sup_{a \in A}d_Y(f(a),g(a)) < \delta$.
 
Suppose first $\text{Lip}(g,A) \le 2\text{Lip}(f,X)$. Then,
\begin{align*}
d_Y(g(x_0),g(y_0)) & \ge d_Y(f(x_0),f(y_0)) - d_Y(f(x_0),g(x_0)) - d_Y(f(y_0),g(y_0)) \\
& > d_Y(f(x_0),f(y_0)) - 2\delta \ge s\text{Lip}(f,X)d_X(x_0,y_0),
\end{align*}
from where $\text{Lip}(g,A) \ge s\text{Lip}(f,X)$. Let $\eta = \varepsilon/(4\text{Lip}(f,X))$ and $h : X \times \{(0,0)\} \cup A \times \{(0,\eta)\} \to Y$ be defined by: for $x \in X$, $h\left(x,(0,0)\right) = (1-s)z+sf(x)$ and for $a \in A$, $h\left(a,(0,\eta)\right) = g(a)$. Thus, for $x \in X$ and $a \in A$ we have that
\begin{align*}
& d_Y\left(h\left(x,(0,0)\right),h\left(a,(0,\eta)\right)\right)^2 = d_Y\left((1-s)z+sf(x), g(a)\right)^2\\
& \quad \le \left(d_Y\left((1-s)z+sf(x), f(a)\right) + d_Y(f(a),g(a))\right)^2\\
& \quad \le \left((1-s)M + s d_Y(f(x),f(a)) + \delta\right)^2 \\
& \quad \le \left(\delta + (1-s)M\right)^2 + s^2 \text{Lip}(f,X)^2d_X(x,a)^2 + 4sM\left(\delta+(1-s)M\right)\\
& \quad < s^2 \text{Lip}(f,X)^2 \left(d_X(x,a)^2 + \frac{(\delta + (1-s)M)(4M+1)}{s^2 \text{Lip}(f,X)^2}\right)\\
& \quad \qquad \text{since } \left(\delta + (1-s)M\right)^2 < \delta + (1-s)M \text{ and } s < 1\\
& \quad < s^2 \text{Lip}(f,X)^2 \left(d_X(x,a)^2  + \eta^2 \right) \\
& \quad \qquad \text{since } \delta + (1-s)M < \varepsilon^2s^2/(16(4M+1)) \\
& \quad \le \text{Lip}(g,A)^2 d\left((x,(0,0)),(a,(0,\eta))\right)^2.
\end{align*}
To complete the argument that $h$ is Lipschitz with smallest Lipschitz constant $\text{Lip}(g,A)$ one uses Busemann convexity in $Y$ along with the fact that the mappings $f$ and $g$ are Lipschitz and $\text{Lip}(g,A) \ge s\text{Lip}(f,X)$. Since $X \times \mathbb{R}^2$ is a CBB$(0)$ space, by Theorem \ref{thm-gen-Kirszbraun} we can extend $h$ to a Lipschitz mapping $h' : X \times \mathbb{R}^2 \to Y$ with $\text{Lip}(h', X \times \mathbb{R}^2) = \text{Lip}(g,A)$. Define $g' : X \to Y$ by $g'(x) = h'\left(x,(0,\eta)\right)$. Clearly, $g'$ extends $g$ and $\text{Lip}(g',X) = \text{Lip}(g,A)$. Moreover, for every $x \in X$,
\begin{align*}
d_Y(g'(x),f(x)) & \le d_Y\left(g'(x),(1-s)z+sf(x)\right) + d_Y\left((1-s)z+sf(x),f(x)\right)\\
& \le d_Y\left(h'\left(x,(0,\eta)\right),h'\left(x,(0,0)\right)\right) + (1-s)M < \text{Lip}(g,A)\eta + \varepsilon/2 \\
& \le 2\text{Lip}(f,X)\frac{\varepsilon}{4\text{Lip}(f,X)} + \frac{\varepsilon}{2} = \varepsilon.
\end{align*}

If $\text{Lip}(g,A) > 2\text{Lip}(f,X)$, consider the set
\[\tilde{A} = \left\{x \in X : \text{dist}(x,A) \ge \frac{2\delta}{\text{Lip}(g,A)}\right\}\]
and define the mapping $\tilde{g} : A \cup \tilde{A} \to Y$ by: for $a \in A$, $\tilde{g}(a) = g(a)$ and for $x \in \tilde{A}$, $\tilde{g}(x) = f(x)$. To see that $\text{Lip}(g,A) = \text{Lip}(\tilde{g}, A \cup \tilde{A})$ it suffices to verify that for any $a \in A$ and $x \in \tilde{A}$,
\begin{align*}
d_Y\left(\tilde{g}(x),\tilde{g}(a)\right) & = d_Y(f(x),g(a)) \le d_Y(f(x),f(a)) + d_Y(f(a),g(a))\\
& < \frac{\text{Lip}(g,A)}{2}d_X(x,a) + \delta\le \frac{\text{Lip}(g,A)}{2}d_X(x,a) + \frac{\text{Lip}(g,A)}{2}\text{dist}(x,A) \\
& \le \text{Lip}(g,A)d_X(x,a).
\end{align*}
Take $g'$ to be any extension of $\tilde{g}$ for which $\text{Lip}(g,A) = \text{Lip}(g',X)$. For $x \in \tilde{A}$, $f(x) = g'(x)$. If $x \notin \tilde{A}$, there exists $a \in A$ such that $d_X(x,a) < 2\delta/\text{Lip}(g,A)$. Thus,
\begin{align*}
d_Y(f(x),g'(x)) & \le d_Y(f(x),f(a)) + d_Y(f(a),g'(a)) + d_Y(g'(a),g'(x))\\
& < \frac{\text{Lip}(g,A)}{2}\frac{2\delta}{\text{Lip}(g,A)} + \delta + \text{Lip}(g,A)\frac{2\delta}{\text{Lip}(g,A)} = 4 \delta < \varepsilon. 
\end{align*}
This ends the proof.
\end{proof}

\begin{theorem} \label{thm-lsc-Phi}
Let $\kappa \le 0$, $X$ a CBB$(\kappa)$ space, $Y$ a complete CAT$(\kappa)$ space and $A \subseteq X$ nonempty. Then the mapping $\Phi : \mathcal{N}(A,Y) \to \oldcal{P}\left(\mathcal{N}(X,Y)\right)$ is lower semi-continuous.
\end{theorem}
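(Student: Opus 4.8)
The plan is to deduce the theorem directly from Lemma~\ref{lemma-lsc-Phi} by unwinding the definition of lower semi-continuity. Recall that $\Phi$ is lower semi-continuous provided that for every open set $V \subseteq \mathcal{N}(X,Y)$ the set
\[
U = \{f \in \mathcal{N}(A,Y) : \Phi(f) \cap V \ne \emptyset\}
\]
is open in $\mathcal{N}(A,Y)$. So I would fix such a $V$ together with an arbitrary $f_0 \in U$ and aim to produce a radius $\delta > 0$ with $B(f_0,\delta) \subseteq U$, where the balls are taken in the supremum metric $d_\infty$.

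The first step is to extract a concrete witness. Since $f_0 \in U$, there is a nonexpansive extension $F_0 \in \Phi(f_0)$ with $F_0 \in V$; as $V$ is open, choose $\varepsilon > 0$ so that every $h \in \mathcal{N}(X,Y)$ with $d_\infty(F_0,h) \le \varepsilon$ lies in $V$. The point to emphasize is that $F_0$ is defined on all of $X$, so it is a legitimate input for Lemma~\ref{lemma-lsc-Phi}. Applying that lemma to $F_0$ and this $\varepsilon$ yields $\delta > 0$ such that any $g \in \mathcal{N}(A,Y)$ with $\sup_{a \in A} d_Y(F_0(a),g(a)) < \delta$ admits an extension $g' \in \mathcal{N}(X,Y)$ with $d_\infty(F_0,g') \le \varepsilon$.

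The final step uses that $F_0$ restricts to $f_0$ on $A$, so $\sup_{a \in A} d_Y(F_0(a),g(a)) = d_\infty(f_0,g)$. Thus for any $g \in B(f_0,\delta)$ the resulting extension $g'$ satisfies $d_\infty(F_0,g') \le \varepsilon$, hence $g' \in V$, whence $g' \in \Phi(g) \cap V$ and therefore $g \in U$. This gives $B(f_0,\delta) \subseteq U$, so $U$ is open and $\Phi$ is lower semi-continuous. There is no substantial obstacle beyond the bookkeeping; the only genuine point is recognizing that the lemma must be applied to the extension $F_0$ rather than to $f_0$ itself, which is precisely what makes the hypothesis $f \in \mathcal{N}(X,Y)$ of Lemma~\ref{lemma-lsc-Phi} available.
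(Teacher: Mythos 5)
Your proof is correct and coincides with the paper's intended argument: the paper proves Theorem~\ref{thm-lsc-Phi} only by declaring it an immediate consequence of Lemma~\ref{lemma-lsc-Phi}, and your unwinding of lower semi-continuity is exactly that deduction. In particular, you correctly identify the one genuine point, namely that the lemma must be applied to the chosen extension $F_0 \in \Phi(f_0) \cap V$ (which lies in $\mathcal{N}(X,Y)$) rather than to $f_0$ itself, with $\sup_{a \in A} d_Y(F_0(a),g(a)) = d_\infty(f_0,g)$ closing the argument.
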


\begin{theorem}\label{thm-lsc-Psi}
Let $X$ be a CBB$(0)$ space, $Y$ a complete CAT$(0)$ space and $A \subseteq X$ nonempty. Then the mapping $\Psi : \mathcal{L}(A,Y) \to \oldcal{P}\left(\mathcal{L}(X,Y)\right)$ is lower semi-continuous.
\end{theorem}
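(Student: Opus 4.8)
The plan is to deduce this directly from Lemma \ref{lemma-lsc-Psi}, which already carries all of the analytic weight; what remains is to unwind the definition of lower semi-continuity. I will work with the open-set formulation: fixing an arbitrary open set $V \subseteq \mathcal{L}(X,Y)$, I must show that
\[
U = \{g \in \mathcal{L}(A,Y) : \Psi(g) \cap V \neq \emptyset\}
\]
is open in $\mathcal{L}(A,Y)$, i.e. that it contains a $d_\infty$-ball around each of its points.

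First I would fix $g_0 \in U$ and choose a witness $f \in \Psi(g_0) \cap V$. By definition of $\Psi$, the mapping $f \in \mathcal{L}(X,Y)$ extends $g_0$ and satisfies $\text{Lip}(f,X) = \text{Lip}(g_0,A)$; since the restriction of $f$ to $A$ is precisely $g_0$, this gives $\text{Lip}(f,A) = \text{Lip}(g_0,A) = \text{Lip}(f,X)$, so $f$ meets the hypothesis $\text{Lip}(f,A) = \text{Lip}(f,X)$ needed to invoke Lemma \ref{lemma-lsc-Psi}. Because $V$ is open and $f \in V$, there is $\varepsilon > 0$ with the open ball $\{f' : d_\infty(f,f') < \varepsilon\}$ contained in $V$; in order later to convert the lemma's non-strict bound into a strict one, I would apply the lemma with $\varepsilon/2$ in place of $\varepsilon$.

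Next I would feed $f$ and $\varepsilon/2$ into Lemma \ref{lemma-lsc-Psi} to obtain $\delta > 0$ with the stated property, and then claim that the ball $\{g \in \mathcal{L}(A,Y) : d_\infty(g,g_0) < \delta\}$ lies in $U$. Indeed, for any such $g$ the identity $f|_A = g_0$ yields $\sup_{a \in A} d_Y(f(a),g(a)) = d_\infty(g_0,g) < \delta$, so the lemma produces an extension $g' \in \mathcal{L}(X,Y)$ with $\text{Lip}(g',X) = \text{Lip}(g,A)$ and $d_\infty(f,g') \le \varepsilon/2 < \varepsilon$. The equality of Lipschitz constants says exactly that $g' \in \Psi(g)$, while $d_\infty(f,g') < \varepsilon$ places $g'$ inside the ball around $f$, hence in $V$. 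Thus $g' \in \Psi(g) \cap V$, so $g \in U$, proving $U$ open.

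I do not anticipate a genuine obstacle at this level: the entire difficulty is already concentrated in Lemma \ref{lemma-lsc-Psi} (the Busemann-convexity and product-space estimates together with the case split according to whether $\text{Lip}(g,A) \le 2\,\text{Lip}(f,X)$). The only points needing mild care in the present argument are the bookkeeping verifying that the chosen extension $f$ of $g_0$ indeed inherits the equal-Lipschitz-constant hypothesis, and the cosmetic passage from the lemma's bound $d_\infty(f,g') \le \varepsilon$ to the strict inequality required to sit inside the open ball, which is dispatched by shrinking $\varepsilon$ at the outset.
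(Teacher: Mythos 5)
Your proposal is correct and coincides with the paper's intent: the paper gives no separate proof of Theorem \ref{thm-lsc-Psi}, stating only that it is an immediate consequence of Lemma \ref{lemma-lsc-Psi}, and your argument is precisely that routine unwinding (including the two points that genuinely need checking: that any $f \in \Psi(g_0)$ satisfies $\text{Lip}(f,A) = \text{Lip}(f,X)$ because $f|_A = g_0$, and the passage from the lemma's non-strict bound to membership in the open set $V$ via $\varepsilon/2$). Nothing further is needed.
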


Using the lower semi-continuity of the mappings $\Phi$ and $\Psi$ we prove that they admit continuous selections. In order to obtain these singlevalued continuous extension operators we apply a selection result due to Horvath \cite{Hor91} which is a generalization of the classical Michael selection theorem to the setting of $c$-spaces. Before stating this selection result we recall the following notions: for $Z$ a topological space, denote by $\langle Z \rangle$ the family of its nonempty and finite subsets. A mapping $F : \langle Z \rangle \to \oldcal{P}(Z)$ is a {\it $c$-structure} if firstly, for each $A \in \langle Z \rangle$, $F(A)$ is nonempty and contractible, and secondly, for every $A_1,A_2 \in \langle Z \rangle$, $A_1 \subseteq A_2$ implies $F(A_1) \subseteq F(A_2)$. The pair $(Z,F)$ is called a {\it $c$-space} and $V \subseteq Z$ is an {\it $F$-set} if for every $A \in \langle V \rangle$ we have that $F(A) \subseteq V$. A $c$-space $(Z,F)$ is called an {\it l.c. metric space} is $(Z,d)$ is a metric space such that open balls are $F$-sets and if $V \subseteq Z$ is an $F$-set, then for every $\varepsilon > 0$, $\{z \in Z : \text{dist}(z,V) < \varepsilon\}$ is an $F$-set. The selection result that we apply is the following.

\begin{theorem}[Horvath \cite{Hor91}] \label{thm-Horvath-sel}
Let $U$ be a paracompact topological space, $(Z,F)$ an l.c. complete metric space and $\Gamma:U \to \oldcal{P}(Z)$ lower semi-continuous such that for each $u \in U$, $\Gamma(u)$ is a nonempty and closed $F$-set. Then there exists a continuous selection for $\Gamma$.
\end{theorem}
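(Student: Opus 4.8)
The plan is to reproduce, in the language of $c$-spaces, the two-stage scheme behind the classical Michael selection theorem, with the linear convex combinations of that proof replaced throughout by the $c$-structure $F$. The first stage produces, for each $\varepsilon>0$, a continuous \emph{$\varepsilon$-approximate selection}, i.e.\ a continuous $s_\varepsilon:U\to Z$ with $\text{dist}(s_\varepsilon(u),\Gamma(u))<\varepsilon$ for all $u$. To build it I would choose, for every $u\in U$, a point $z_u\in\Gamma(u)$; lower semi-continuity of $\Gamma$ makes $W_u=\{v\in U:\text{dist}(z_u,\Gamma(v))<\varepsilon\}$ open and containing $u$, so $\{W_u\}_u$ is an open cover of the paracompact space $U$. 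Paracompactness then yields a locally finite refinement together with a subordinate partition of unity, equivalently a continuous map of $U$ into the geometric nerve $\mathcal{N}$ of the cover.

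The heart of the argument, and the only place where both the $c$-structure and the $F$-set hypotheses are genuinely needed, is to convert this combinatorial datum into a continuous map into $Z$. To the vertex of $\mathcal{N}$ indexed by $i$ I assign the point $z_i$; a simplex of $\mathcal{N}$ corresponds to indices $i_0,\dots,i_k$ whose carriers share a common point $u$, and for such $u$ one has $\text{dist}(z_{i_j},\Gamma(u))<\varepsilon$ for every $j$. Since $\Gamma(u)$ is a closed $F$-set, its open $\varepsilon$-neighborhood $V_\varepsilon(u)=\{w\in Z:\text{dist}(w,\Gamma(u))<\varepsilon\}$ is again an $F$-set by the l.c.\ assumption; as $\sigma=\{z_{i_0},\dots,z_{i_k}\}\in\langle V_\varepsilon(u)\rangle$, the definition of $F$-set forces $F(\sigma)\subseteq V_\varepsilon(u)$. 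I would then define the selection over the skeleta of $\mathcal{N}$ by induction on dimension: contractibility of each $F(\sigma)$ allows extending a map already given on the boundary of a simplex into $F(\sigma)$, while the monotonicity $F(\sigma')\subseteq F(\sigma)$ for faces $\sigma'\subseteq\sigma$ ensures the pieces agree along shared faces. Composing with the map $U\to\mathcal{N}$ yields the desired $s_\varepsilon$, which stays within $\varepsilon$ of $\Gamma(u)$ precisely because its image over each carrier lies in the corresponding $V_\varepsilon(u)$.

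In the second stage I would upgrade these approximate selections to an exact one. Applying the first stage repeatedly, and using the $c$-structure once more to interpolate between a selection and the next, finer approximation, I would construct continuous maps $s_n$ with $\text{dist}(s_n(u),\Gamma(u))<2^{-n}$ and with $d(s_n,s_{n+1})$ summable, so that $\{s_n\}$ is uniformly Cauchy. Completeness of $(Z,d)$ provides a uniform limit $s$, which is continuous; and since each $\Gamma(u)$ is closed while $\text{dist}(s_n(u),\Gamma(u))\to 0$, the limit satisfies $s(u)\in\Gamma(u)$, so $s$ is a continuous selection of $\Gamma$.

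The main obstacle is the inductive gluing of the second paragraph: in contrast with the Banach-space setting underlying Michael's theorem, no algebraic formula produces a point simultaneously close to all the relevant values of $\Gamma$, so one must instead extend maps skeleton by skeleton using contractibility, verifying at each step—through the $F$-set conditions and the monotonicity of $F$—that the extension never leaves the prescribed neighborhood $V_\varepsilon(u)$. Maintaining uniform control of these neighborhoods across the iteration of the third paragraph, so that the sequence $\{s_n\}$ is genuinely Cauchy rather than merely approximately selecting, is the delicate bookkeeping on which the whole argument turns.
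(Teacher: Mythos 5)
Note first that the paper offers no proof of this statement: it is imported from Horvath \cite{Hor91}, so the comparison must be with Horvath's own argument. Your proposal reconstructs precisely that argument---the two-stage Michael scheme with linear convex combinations replaced by the $c$-structure. Your first stage is complete and correct: the cover $W_u=\{v\in U:\text{dist}(z_u,\Gamma(v))<\varepsilon\}$ is open by lower semi-continuity, paracompactness gives a subordinate partition of unity and a map into the nerve, the skeletal induction over the nerve is legitimate because each $F(\sigma)$ is contractible (so maps on boundary spheres extend) and because $F(\sigma')\subseteq F(\sigma)$ for faces guarantees coherence, and the l.c.\ hypothesis applied to the closed $F$-set $\Gamma(u)$ makes $V_\varepsilon(u)$ an $F$-set, which traps the image over each carrier. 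This is exactly Horvath's construction of continuous ``$c$-convex combination'' maps on nerves.

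The gap is in your second stage. Applying stage one ``repeatedly'' produces maps $s_n$ with $\text{dist}(s_n(u),\Gamma(u))<2^{-n}$ but, by itself, no relation at all between $s_n$ and $s_{n+1}$; and ``using the $c$-structure to interpolate'' between consecutive approximations is not a mechanism---a path from $s_n(u)$ to $s_{n+1}(u)$ inside some $F$-set does nothing to shrink their distance. The correct fix (Michael's, and Horvath's) is to obtain $s_{n+1}$ by running stage one not on $\Gamma$ but on the auxiliary multimap $\Gamma_n(u)=\Gamma(u)\cap B\left(s_n(u),2^{-n+1}\right)$ with $B$ an open ball: its values are nonempty because $s_n$ is a $2^{-n}$-approximate selection; they are $F$-sets because open balls are $F$-sets by the l.c.\ hypothesis and an intersection of $F$-sets is an $F$-set (immediate from the definition); and $\Gamma_n$ is lower semi-continuous by the classical Michael-type lemma combining lsc of $\Gamma$, openness of the ball and continuity of $s_n$. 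Note that stage one never used closedness of the values, only nonemptiness, the $F$-set property and lsc, so it applies to $\Gamma_n$ even though its values need not be closed. This yields $d_\infty(s_n,s_{n+1})\le 2^{-n+1}+2^{-(n+1)}$, hence the uniform Cauchy property, and your limit argument then goes through verbatim. In short: right architecture, the same as the cited source, but the summability of the increments must come from restricting the multimap to balls around the previous approximation---the step your sketch labels ``delicate bookkeeping'' without supplying it.
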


Let $\kappa \le 0$. Suppose $X$ is a CBB$(\kappa)$ space and $Y$ a complete CAT$(\kappa)$ space. We check in the sequel that we can indeed make use of the above theorem relying basically on Busemann convexity in $Y$. We say that $B \in \oldcal{P}(C(X,Y))$ is {\it convex} if for every $g_1,g_2 \in B$ and every $t \in [0,1]$ we have that the mapping $h : X \to Y$, $h = (1-t)g_1 + tg_2$ (that is, $h(x) = (1-t)g_1(x) + tg_2(x)$ for every $x \in X$) belongs to $B$. Note that balls in $C(X,Y)$ are convex.

The mapping $\Phi$ has nonempty and closed values in $C(X,Y)$. Moreover, for each $f \in \mathcal{N}(A,Y)$, $\Phi(f)$ is convex. To see this let $f',f'' \in \Phi(f)$ and $t \in [0,1]$. Then, for each $x \in X$,
\begin{align*}
d_Y((1-t)f'(x) + tf''(x),(1-t)f'(y) + tf''(y)) &\le (1-t)d_Y(f'(x),f'(y))\\
& \quad + td_Y(f''(x),f''(y))\\
& \le d_X(x,y). 
\end{align*}
Similarly, when $\kappa = 0$, $\Psi$ is also nonempty, closed and convex-valued. Define $F : \langle C(X,Y) \rangle \to \oldcal{P}(C(X,Y))$ by
\[
F(A) = \bigcap \{B : A \subseteq B, B \text{ convex}\}, \quad \text{for each } A \in \langle C(X,Y) \rangle.
\]
Let $A \in \langle C(X,Y) \rangle$. Then $F(A) \ne \emptyset$. Fix $g_1 \in A$ and define  $H : [0,1] \times F(A) \to F(A)$ by $H(t,f) = (1-t)f + tg_1$. Note that for each $f \in F(A)$, $H(0,f) = f$ and $H(1,f) = g_1$. It is easy to see that $H$ is continuous and so $F(A)$ is contractible. Clearly, for every $A_1, A_2 \in \langle C(X,Y) \rangle$, $A_1 \subseteq A_2$ implies $F(A_1) \subseteq F(A_2)$. Thus, $\left(C(X,Y),F\right)$ is a $c$-space. Note that a subset of $C(X,Y)$ is an $F$-set if and only if it is convex. By Busemann convexity in $Y$ one can finally show that $\left(C(X,Y),F\right)$ is an l.c. metric space.

\begin{theorem}\label{thm-cont-sel-nonexp}
Let $\kappa \le 0$, $X$ a CBB$(\kappa)$ space, $Y$ a complete CAT$(\kappa)$ space and $A \subseteq X$ nonempty. Then there exists a continuous mapping $\alpha : \mathcal{N}(A,Y) \to \mathcal{N}(X,Y)$ such that for all $g \in \mathcal{N}(A,Y)$, $\alpha(g)(a) = g(a)$ for every $a \in A.$
\end{theorem}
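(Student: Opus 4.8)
The plan is to obtain $\alpha$ as a continuous selection of $\Phi$ by a direct application of Horvath's selection theorem (Theorem \ref{thm-Horvath-sel}), taking $U = \mathcal{N}(A,Y)$, the $c$-space $(Z,F) = (C(X,Y),F)$ assembled above, and $\Gamma = \Phi$ regarded as a multivalued map into $C(X,Y)$ via the inclusion $\mathcal{N}(X,Y) \subseteq C(X,Y)$. The key observation that makes this clean is that once a continuous selection $\alpha$ of $\Phi$ is produced, the requirement $\alpha(g) \in \Phi(g)$ for every $g$ means precisely that $\alpha(g)$ is a nonexpansive extension of $g$; hence automatically $\alpha(g) \in \mathcal{N}(X,Y)$ and $\alpha(g)(a) = g(a)$ for all $a \in A$, so no separate argument is needed to confine the values of $\alpha$ to $\mathcal{N}(X,Y)$ even though the selection theorem is run in the larger ambient space $C(X,Y)$.

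To apply Theorem \ref{thm-Horvath-sel} I would verify its hypotheses in turn. First, $U = \mathcal{N}(A,Y)$ is a metric space under $d_\infty$, hence paracompact by Stone's theorem. Second, $(C(X,Y),F)$ has already been shown to be a $c$-space and, using Busemann convexity in $Y$, an l.c. metric space; it is complete because $Y$ is complete. Third, $\Phi$ is lower semi-continuous by Theorem \ref{thm-lsc-Phi}. It then remains to check that for each $g \in \mathcal{N}(A,Y)$ the value $\Phi(g)$ is a nonempty, closed $F$-set: nonemptiness is exactly the Kirszbraun-type extension guaranteed by Theorem \ref{thm-gen-Kirszbraun}; closedness in $C(X,Y)$ follows since a $d_\infty$-limit of nonexpansive extensions of $g$ is again a nonexpansive map agreeing with $g$ on $A$; and $\Phi(g)$ is an $F$-set because it is convex (as computed above) and, for this $c$-structure, a subset is an $F$-set if and only if it is convex.

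With all the hypotheses in place, Theorem \ref{thm-Horvath-sel} yields a continuous selection $\alpha : \mathcal{N}(A,Y) \to C(X,Y)$ of $\Phi$, which by the observation of the first paragraph is the desired extension operator mapping into $\mathcal{N}(X,Y)$. I do not expect a serious obstacle here, since the substantive work — establishing the lower semi-continuity of $\Phi$ and verifying that $(C(X,Y),F)$ is an l.c. complete metric space — has already been carried out before the statement. The only point genuinely demanding care is the bookkeeping that ensures the selection lands in $\mathcal{N}(X,Y)$ rather than merely in $C(X,Y)$, and this is precisely what is resolved by noting that the values of $\Phi$ sit inside $\mathcal{N}(X,Y)$ from the outset.
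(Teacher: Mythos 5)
Your proposal is correct and follows essentially the same route as the paper: both apply Horvath's selection theorem (Theorem \ref{thm-Horvath-sel}) to $\Phi$ viewed as a lower semi-continuous map into $C(X,Y)$ with the convexity-based $c$-structure $F$, checking that the values $\Phi(g)$ are nonempty (via Theorem \ref{thm-gen-Kirszbraun}), closed, and convex, hence $F$-sets. The bookkeeping points you spell out — paracompactness of $\mathcal{N}(A,Y)$, completeness of $C(X,Y)$, and that the selection automatically lands in $\mathcal{N}(X,Y)$ because $\Phi$ takes values there — are exactly the observations the paper relies on.
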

\begin{proof}
We can view the mapping $\Phi$ with values in $\oldcal{P}(C(X,Y))$ while still preserving its lower semi-continuity. Since any metric space is a paracompact topological space we can now apply Theorem \ref{thm-Horvath-sel} to obtain a continuous extension mapping $\alpha : \mathcal{N}(A,Y) \to C(X,Y)$. Because $\Phi$ actually takes values in $\oldcal{P}(\mathcal{N}(X,Y))$ we obtain the conclusion.
\end{proof}

For bounded Lipschitz mappings we obtain the following result.
\begin{theorem}\label{thm-cont-sel-Lip}
Let $X$ be a CBB$(0)$ space, $Y$ a complete CAT$(0)$ space and $A \subseteq X$ nonempty. Then there exists a continuous mapping $\beta : \mathcal{L}(A,Y) \to \mathcal{L}(X,Y)$ such that for all $g \in \mathcal{L}(A,Y)$, $\beta(g)(a) = g(a)$ for every $a \in A$ and $\emph{Lip}(\beta(g),X) = \emph{Lip}(g,A)$.
\end{theorem}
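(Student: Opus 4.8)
The plan is to follow the proof of Theorem \ref{thm-cont-sel-nonexp} almost verbatim, this time applying Horvath's selection theorem (Theorem \ref{thm-Horvath-sel}) to the multivalued operator $\Psi$ regarded as a mapping into $\oldcal{P}(C(X,Y))$. Most of the required ingredients are already in place: by Theorem \ref{thm-lsc-Psi} the operator $\Psi$ is lower semi-continuous; the domain $\mathcal{L}(A,Y)$, being a metric space, is paracompact; and the discussion preceding Theorem \ref{thm-cont-sel-nonexp} shows (for every $\kappa \le 0$, in particular for $\kappa = 0$) that $(C(X,Y),F)$ is an l.c. complete metric space in which the $F$-sets are exactly the convex subsets. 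Hence the only thing left to check is that for each $g \in \mathcal{L}(A,Y)$ the value $\Psi(g)$ is a nonempty closed $F$-set of $C(X,Y)$.

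Nonemptiness follows from Theorem \ref{thm-gen-Kirszbraun}: since $\kappa = 0$, scaling the metric reduces an arbitrary Lipschitz constant to the nonexpansive case, so an extension with $\text{Lip}(\cdot,X) = \text{Lip}(g,A)$ always exists. For the $F$-set (that is, convexity) property I would take $g',g'' \in \Psi(g)$ and $t \in [0,1]$ and set $h = (1-t)g' + tg''$. Busemann convexity of $Y$, exactly as in the computation before Theorem \ref{thm-cont-sel-nonexp}, gives $\text{Lip}(h,X) \le \text{Lip}(g,A)$; the reverse inequality comes for free because $h$ restricted to $A$ equals $g$ (both $g'$ and $g''$ extend $g$), so $\text{Lip}(h,X) \ge \text{Lip}(h,A) = \text{Lip}(g,A)$. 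Thus $h$ is again an extension of $g$ with the same smallest Lipschitz constant and $h \in \Psi(g)$.

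The step I expect to require the most care is closedness, precisely because $\mathcal{L}(X,Y)$ is \emph{not} closed in $C(X,Y)$. The point is that the individual fibre $\Psi(g)$, consisting of extensions whose Lipschitz constant is pinned to the fixed value $L = \text{Lip}(g,A)$, is closed even though the ambient Lipschitz class is not. Concretely, if $g_n \in \Psi(g)$ converges uniformly to some $\bar g \in C(X,Y)$, then $\bar g$ agrees with $g$ on $A$ (uniform convergence preserves the values $g_n(a) = g(a)$) and $\bar g$ is $L$-Lipschitz as a uniform limit of $L$-Lipschitz maps, so $\text{Lip}(\bar g, X) \le L$; restricting to $A$ again forces $\text{Lip}(\bar g, X) \ge \text{Lip}(\bar g, A) = L$, whence $\bar g \in \Psi(g)$. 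With $\Psi(g)$ nonempty, closed and convex for every $g$, Theorem \ref{thm-Horvath-sel} yields a continuous selection $\beta : \mathcal{L}(A,Y) \to C(X,Y)$, and since $\Psi$ takes values in $\oldcal{P}(\mathcal{L}(X,Y))$ the map $\beta$ has the asserted properties.
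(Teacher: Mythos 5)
Your proposal is correct and follows essentially the same route as the paper, which proves Theorem \ref{thm-cont-sel-Lip} exactly as Theorem \ref{thm-cont-sel-nonexp}: apply Horvath's selection theorem to $\Psi$ viewed in $\oldcal{P}(C(X,Y))$, using the lower semi-continuity from Theorem \ref{thm-lsc-Psi} and the observation (made just before Theorem \ref{thm-cont-sel-nonexp}) that $\Psi$ is nonempty, closed and convex-valued. Your verification of closedness of the fibres $\Psi(g)$ — the point that the Lipschitz constant is pinned to $\mathrm{Lip}(g,A)$ even though $\mathcal{L}(X,Y)$ itself is not closed in $C(X,Y)$ — is exactly the detail the paper leaves implicit, and you handle it correctly.
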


\begin{remark} \label{rmk-R-trees}
Note that in Lemmas \ref{lemma-lsc-Phi} and \ref{lemma-lsc-Psi} the lower curvature bound of $X$ is only used to apply Theorem \ref{thm-gen-Kirszbraun}. However, when $Y$ is a complete $\mathbb{R}$-tree, one can extend Lipschitz mappings (while keeping the same Lipschitz constant) if $X$ is an arbitrary metric space. Thus, as before, one can consider even a simpler reasoning in $X \times \mathbb{R}$ to obtain that both mappings $\Phi$ and $\Psi$ are lower semi-continuous and admit continuous selections. This property will be improved for the mapping $\Phi$ in Section \ref{hyperconvexity}.
\end{remark}

\begin{remark}
If $\kappa > 0$ the argument given in this section does not work in a straightforward way. Note that in this case the direct product $X \times M_\kappa^2$ is not necessarily a CBB$(\kappa)$ space. Moreover, the images of the mappings $\Phi$ and $\Psi$ are no longer $F$-sets when considering the $c$-structure $F$ defined before.
\end{remark}

\section{A convexity assumption on the images of the extensions}\label{convexity}
 
In this section we show that one can actually choose extensions in a continuous way even when imposing the condition that the image of the extension belongs to the closure of the convex hull of the image of the original mapping. Related results in the case of Hilbert spaces were recently established in \cite{KopRei11}, and we extend them to our setting.

Recall first the next inequality which stems from the work of Reshetnyak (see, for instance, \cite[Theorem 2.3.1]{Jos97} or \cite[ Lemma 2.1]{LanPavSch00} for a simple proof).

\begin{lemma} \label{lemma-ineq-CAT(0)}
Let $Y$ be a CAT$(0)$ space. Then for every $x,y,u,v \in Y,$
\[d(x,y)^2 + d(u,v)^2 \le d(x,v)^2 + d(y,u)^2 + 2d(x,u)d(y,v).\]
\end{lemma}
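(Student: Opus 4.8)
The plan is to read this inequality as the CAT$(0)$ incarnation of the Cauchy--Schwarz inequality and to reduce it, through comparison triangles, to a single scalar inequality. As a guiding prototype, observe that in a Hilbert space the claim is \emph{exactly} Cauchy--Schwarz: the polarization identity gives
\[
d(x,v)^2 + d(y,u)^2 - d(x,y)^2 - d(u,v)^2 = 2\langle x-u,\, y-v\rangle \ge -2\,\|x-u\|\,\|y-v\|,
\]
which rearranges to the desired estimate and shows that it is sharp, equality forcing $x-u$ and $y-v$ to be antiparallel. This tightness is the signal that a purely majorization-type argument (Reshetnyak majorization of the quadrilateral $x,y,u,v$) will \emph{not} suffice, since that pushes the diagonals $d(x,u),d(y,v)$ in the wrong direction; the proof must retain metric information up to first order, that is, angles.

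First I would isolate the two hinges sharing the vertex $x$. Writing $L=d(x,u)$ and disposing of the trivial case $L=0$ (which gives equality), I would take $\kappa=0$ comparison triangles for $\Delta(x,u,y)$ and $\Delta(x,u,v)$ and apply the Euclidean law of cosines in each. Denoting by $\bar\gamma$ and $\bar\delta$ the comparison angles at $x$ of these two triangles, this produces the exact identity
\[
d(x,y)^2 + d(u,v)^2 - d(x,v)^2 - d(y,u)^2 = 2L\bigl(d(x,y)\cos\bar\gamma - d(x,v)\cos\bar\delta\bigr).
\]
After dividing by $2L$, the lemma becomes equivalent to the single scalar inequality $d(x,y)\cos\bar\gamma - d(x,v)\cos\bar\delta \le d(y,v)$, which is trivial when its left-hand side is nonpositive and otherwise may be squared.

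To bound $d(y,v)$ from below I would bring in the third hinge, between $[x,y]$ and $[x,v]$. The angle form of the CAT$(0)$ comparison inequality (see \cite{Bri99}) yields
\[
d(y,v)^2 \ge d(x,y)^2 + d(x,v)^2 - 2\,d(x,y)\,d(x,v)\cos\angle_x(y,v),
\]
where $\angle_x(y,v)$ is the Alexandrov angle. Substituting this into the squared scalar inequality, and interpreting $d(x,y)\cos\bar\gamma,\ d(x,y)\sin\bar\gamma$ (and likewise for $\bar\delta$) as the coordinates of the comparison vertices relative to the line $\bar x\bar u$, the whole matter collapses to a trigonometric comparison between the Alexandrov angle $\angle_x(y,v)$ and the two comparison angles $\bar\gamma,\bar\delta$.

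The hard part will be exactly this angle comparison. The subtlety is that in a CAT$(0)$ space the comparison angles dominate the Alexandrov angles, $\bar\gamma\ge\angle_x(u,y)$ and $\bar\delta\ge\angle_x(u,v)$, yet, unlike the Alexandrov angles, they need \emph{not} obey a triangle inequality; so the clean-looking sufficient condition $\angle_x(y,v)\ge|\bar\gamma-\bar\delta|$ can genuinely fail, and the estimate survives only because the sine factors $d(x,y)\sin\bar\gamma$ and $d(x,v)\sin\bar\delta$ weight the offending term favourably. To handle this honestly I would invoke the first variation formula for the distance function along the geodesics issuing from $x$, which expresses the relevant derivatives through the true Alexandrov angles, together with the monotonicity of comparison angles (Alexandrov's lemma). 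Since the statement is classical and due to Reshetnyak, the route I would ultimately present is to cite the short arguments already available in \cite[Theorem~2.3.1]{Jos97} and \cite[Lemma~2.1]{LanPavSch00}, using the reduction above only to make transparent why the inequality is the sharp CAT$(0)$ analogue of Cauchy--Schwarz and why a naive comparison argument is not enough.
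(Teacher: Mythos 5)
Your proposal is, in the end, the same as the paper's treatment, because the paper does not prove this lemma at all: it quotes the inequality as stemming from Reshetnyak's work and cites \cite[Theorem 2.3.1]{Jos97} and \cite[Lemma 2.1]{LanPavSch00} for a simple proof --- precisely the two sources you ultimately fall back on. What you add on top of the citation is correct as far as it goes, and worth recording. The Hilbert-space identity $d(x,v)^2+d(y,u)^2-d(x,y)^2-d(u,v)^2=2\langle x-u,\,y-v\rangle$ is right (this is the quasilinearization viewpoint, under which the lemma is literally Cauchy--Schwarz for a quadrilateral ``inner product''), and your two-hinge reduction is an exact identity: with $L=d(x,u)>0$ and $\bar\gamma,\bar\delta$ the comparison angles at $x$ in $\Delta(x,u,y)$ and $\Delta(x,u,v)$, the Euclidean law of cosines gives $d(x,y)^2+d(u,v)^2-d(x,v)^2-d(y,u)^2=2L\bigl(d(x,y)\cos\bar\gamma-d(x,v)\cos\bar\delta\bigr)$, so the lemma is equivalent to $d(x,y)\cos\bar\gamma-d(x,v)\cos\bar\delta\le d(y,v)$. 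Your two warnings are also accurate: Reshetnyak majorization preserves the sides of the quadrilateral but enlarges the diagonals $d(x,u),d(y,v)$, which sit on the favourable side of the inequality, so plain majorization cannot close the argument; and the clean sufficient condition $\angle_x(y,v)\ge|\bar\gamma-\bar\delta|$ can genuinely fail because comparison angles, unlike Alexandrov angles, obey no triangle inequality (a tripod with $x$ and $u$ on two branches and $y,v$ at different distances along the third gives $\angle_x(y,v)=0$ while $\bar\gamma\ne\bar\delta$). But this honesty also marks where your sketch stops being a proof: after squaring and inserting the CAT$(0)$ hinge bound, what must be shown is $2pq\bigl(\cos\angle_x(y,v)-\cos\bar\gamma\cos\bar\delta\bigr)\le p^2\sin^2\bar\gamma+q^2\sin^2\bar\delta$ with $p=d(x,y)$, $q=d(x,v)$, and neither the first variation formula nor Alexandrov's lemma is actually brought to bear on this inequality in your text; the reduction therefore remains motivation, and the citation is what carries the statement --- exactly as in the paper, and with the same references.
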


The property below provides a uniform bound on the distance between the projection points from a common point onto two sets. A similar result in uniformly smooth Banach spaces is \cite[Lemma 3.4]{AlbReiYao03}.

\begin{lemma}\label{lemma-projections-balls}
Let $Y$ be a complete CAT$(0)$ space, $C_1, C_2 \subseteq Y$ nonempty, closed and convex and suppose $r_1$ and $r_2$ are positive numbers. If there exists $z \in Y$ such that $C_1, C_2 \subseteq B(z,r_1)$, then for any $x \in B(z,r_2)$,
\[d(P_{C_1}(x), P_{C_2}(x))^2 \le 2(r_1 + r_2)H(C_1,C_2).\]
\end{lemma}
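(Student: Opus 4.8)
The plan is to prove the stronger one-sided estimate
\[
d(p_1,p_2)^2 + d(x,p_1)^2 \le d(x,p_2)^2 + 2(r_1+r_2)H(C_1,C_2),
\]
where $p_1 = P_{C_1}(x)$ and $p_2 = P_{C_2}(x)$ (both well defined and singlevalued, since $C_1,C_2$ are closed in the complete space $Y$ and hence complete and convex), and then to add it to the inequality obtained by interchanging the roles of $C_1$ and $C_2$. Adding the two makes the terms $d(x,p_1)^2$ and $d(x,p_2)^2$ cancel and yields $2\,d(p_1,p_2)^2 \le 4(r_1+r_2)H(C_1,C_2)$, which is the claim. The point of aiming at the one-sided estimate is that it isolates a single projection, and thus lets me exploit the variational characterization of the metric projection onto one convex set at a time.

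First I would introduce the auxiliary point $q_1 = P_{C_1}(p_2) \in C_1$, so that $d(p_2,q_1) = \mbox{dist}(p_2,C_1) \le H(C_1,C_2)$ because $p_2 \in C_2$. Next I would use the standard variational property of projections in CAT$(0)$ spaces (see \cite{Bri99}): since $p_1 = P_{C_1}(x)$ and $q_1 \in C_1$, the comparison angle at $p_1$ is obtuse and hence
\[
d(x,q_1)^2 \ge d(x,p_1)^2 + d(p_1,q_1)^2.
\]
The key move is then to apply Lemma \ref{lemma-ineq-CAT(0)} to the quadruple $(p_1,p_2,x,q_1)$, which gives
\[
d(p_1,p_2)^2 + d(x,q_1)^2 \le d(p_1,q_1)^2 + d(p_2,x)^2 + 2\,d(p_1,x)\,d(p_2,q_1).
\]
Substituting the lower bound for $d(x,q_1)^2$ into the left-hand side makes the terms $d(p_1,q_1)^2$ cancel on both sides, leaving $d(p_1,p_2)^2 + d(x,p_1)^2 \le d(x,p_2)^2 + 2\,d(p_1,x)\,d(p_2,q_1)$. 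Finally I would estimate $d(p_1,x) \le d(x,z) + d(z,p_1) \le r_2 + r_1$ (using $x \in B(z,r_2)$ and $p_1 \in C_1 \subseteq B(z,r_1)$) and $d(p_2,q_1) \le H(C_1,C_2)$ to obtain the desired one-sided estimate.

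I expect the main obstacle to be the bookkeeping that produces the sharp constant $2(r_1+r_2)$ rather than a larger one. Replacing distances by crude triangle-inequality surrogates (e.g. $d(p_1,q_1) \ge d(p_1,p_2) - H(C_1,C_2)$) introduces extra cross terms in $d(p_1,p_2)$ and degrades the constant; the reason the argument above stays sharp is precisely that the Reshetnyak-type inequality of Lemma \ref{lemma-ineq-CAT(0)} is the correct nonlinear substitute for the Cauchy--Schwarz step of the Hilbert space proof, and the quadruple $(p_1,p_2,x,q_1)$ is chosen so that the unwanted term $d(p_1,q_1)^2$ cancels exactly. A secondary point to be careful about is the invocation of the projection/obtuse-angle inequality, which is where completeness and convexity of $C_1$ together with the CAT$(0)$ geometry enter; everything else is elementary once Lemma \ref{lemma-ineq-CAT(0)} is in hand.
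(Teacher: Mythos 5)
Your proof is correct and follows essentially the same route as the paper's: both introduce the auxiliary projections $q_1 = P_{C_1}(p_2)$ (and, by symmetry, $q_2 = P_{C_2}(p_1)$), use the obtuse-angle property $d(x,q_1)^2 \ge d(x,p_1)^2 + d(p_1,q_1)^2$, apply Lemma \ref{lemma-ineq-CAT(0)} to the same quadruple so that $d(p_1,q_1)^2$ cancels, and then add the two symmetric one-sided estimates. The only cosmetic difference is that you insert the bounds $d(x,p_1) \le r_1+r_2$ and $d(p_2,q_1) \le H(C_1,C_2)$ before summing, whereas the paper sums first and bounds afterwards.
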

\begin{proof} 
Let $C_1,C_2 \subseteq B(z,r_1)$ and $x \in B(z,r_2)$. Denote $p_1 = P_{C_1}(x)$, $p_2=P_{C_2}(x)$, $q_1=P_{C_1}(p_2)$ and $q_2=P_{C_2}(p_1)$. Clearly, $d(p_2,q_1) \le H(C_1,C_2)$ and $d(p_1,q_2) \le H(C_1,C_2)$. Note also that that $d(x,p_1) \le d(x,z) + d(z,p_1) \le r_1+r_2$ and $d(x,p_2) \le r_1 + r_2$. Since $q_1 \in C_1$ and $p_1=P_{C_1}(x)$ it follows that $\angle_{p_1}(x,q_1) \ge \pi/2$ (see \cite[Proposition 2.4, page 176]{Bri99}) which yields 
$d(x,q_1)^2 \ge d(x,p_1)^2 + d(p_1,q_1)^2$. Similarily, $d(x,q_2)^2 \ge d(x,p_2)^2 + d(p_2,q_2)^2$. By Lemma \ref{lemma-ineq-CAT(0)} we also have that
\[d(x,q_1)^2 + d(p_1,p_2)^2 \le d(x,p_2)^2 + d(p_1,q_1)^2 + 2d(x,p_1)d(p_2,q_1),\]
and therefore
\begin{equation}\label{lemma-proj-eq1}
d(x,p_1)^2 + d(p_1,p_2)^2 \le d(x,p_2)^2 + 2d(x,p_1)d(p_2,q_1).
\end{equation}
Likewise,
\begin{equation}\label{lemma-proj-eq2}
d(x,p_2)^2 + d(p_1,p_2)^2 \le d(x,p_1)^2 + 2d(x,p_2)d(p_1,q_2).
\end{equation}
Adding (\ref{lemma-proj-eq1}) and (\ref{lemma-proj-eq2}) we get that
\[d(p_1,p_2)^2 \le d(x,p_1)d(p_2,q_1) + d(x,p_2)d(p_1,q_2) \le 2\left(r_1+r_2\right)H(C_1,C_2).\]
\end{proof}

We prove next a property of the Hausdorff distance. For the corresponding result in the setting of normed spaces, see \cite{Pri40}.
\begin{lemma}\label{lemma-Hausdorff-dist}
Let $Y$ be a CAT$(0)$ space, $C_1, C_2 \subseteq Y$ nonempty. Then,
\[H\left(\overline{\emph{co}}(C_1), \overline{\emph{co}}(C_2)\right) \le H(C_1,C_2).\]
\end{lemma}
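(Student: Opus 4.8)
The plan is to establish the inequality $H\!\left(\overline{\mathrm{co}}(C_1),\overline{\mathrm{co}}(C_2)\right) \le H(C_1,C_2)$ by first reducing it, via properties of the Hausdorff distance, to the simpler estimate $H\!\left(\mathrm{co}(C_1),\mathrm{co}(C_2)\right) \le H(C_1,C_2)$ at the level of convex hulls before closures, and then proving that by induction along the filtration $G_n$. Since $\overline{\mathrm{co}}(C_i)$ is the closure of $\mathrm{co}(C_i)$, and the Hausdorff distance between two sets equals that between their closures, it suffices to bound $H\!\left(\mathrm{co}(C_1),\mathrm{co}(C_2)\right)$. By symmetry of the max defining $H$, the whole problem collapses to a single one-sided statement: I will show that for every $p \in \mathrm{co}(C_1)$ one has $\mathrm{dist}\!\left(p,\mathrm{co}(C_2)\right) \le H(C_1,C_2)$, and the reverse direction follows by interchanging the roles of $C_1$ and $C_2$.

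**The core step** is the induction on $n$ in $G_n(C_1)$. The base case $n=1$, i.e. points of $C_1$ themselves: for $p\in C_1$ we have $\mathrm{dist}(p,C_2)\le H(C_1,C_2)$ by definition, and since $C_2\subseteq \mathrm{co}(C_2)$ the same bound holds with $\mathrm{co}(C_2)$ in place of $C_2$. For the inductive step, a point of $G_{n}(C_1)=G_1(G_{n-1}(C_1))$ lies on a geodesic $p=(1-t)a+tb$ with $a,b\in G_{n-1}(C_1)$. By the inductive hypothesis I can pick points $a',b'\in \mathrm{co}(C_2)$ with $d(a,a')\le H(C_1,C_2)+\eta$ and $d(b,b')\le H(C_1,C_2)+\eta$ for arbitrary $\eta>0$. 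The plan is then to consider the point $p'=(1-t)a'+tb'$, which lies in $\mathrm{co}(C_2)$ because $\mathrm{co}(C_2)$ is convex (being a convex hull, $G_1$ of it returns itself), and to estimate $d(p,p')$. Here is where I would invoke Busemann convexity of $Y$: in a CAT$(0)$ (hence Busemann convex) space, the distance between corresponding points of two geodesics is controlled by a convex combination of the endpoint distances, giving
\[
d(p,p') \le (1-t)\,d(a,a') + t\,d(b,b') \le H(C_1,C_2)+\eta.
\]
Letting $\eta\to 0$ yields $\mathrm{dist}(p,\mathrm{co}(C_2))\le H(C_1,C_2)$, completing the induction.

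**The step I expect to be the main obstacle** is the convexity-preservation argument that lets me form $p'$ and keep it inside $\mathrm{co}(C_2)$: I must be careful that the point $(1-t)a'+tb'$ denotes a genuine point on a geodesic between $a'$ and $b'$, and that any such point lies in $\mathrm{co}(C_2)$. This is exactly the statement that $\mathrm{co}(C_2)$ is convex, i.e. $G_1(\mathrm{co}(C_2))=\mathrm{co}(C_2)$, which follows from the remark in the preliminaries that $\mathrm{co}(C)=\bigcup_n G_n(C)$ is convex in a Busemann convex space. The subtler point is the Busemann estimate on $d(p,p')$: the two geodesics $[a,b]$ and $[a',b']$ do not share an endpoint, so the basic Busemann inequality (stated for geodesics with a common starting point) does not apply directly. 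I would handle this by the standard two-step comparison — interpolating through an auxiliary geodesic, for instance comparing $[a,b]$ with $[a,b']$ and then $[a,b']$ with $[a',b']$, applying the common-endpoint Busemann inequality twice and using the triangle inequality — which is a well-known consequence of Busemann convexity and yields precisely the convex-combination bound above. Once this estimate is in hand, the rest is the routine symmetric bookkeeping in the definition of $H$.
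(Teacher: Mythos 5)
Your proof is correct, but it takes a genuinely different route from the paper. The paper argues in one stroke: it sets $E=\{y\in Y:\mathrm{dist}(y,\overline{\mathrm{co}}(C_1))\le H(C_1,C_2)\}$, observes that $E$ is closed and convex by Busemann convexity (sublevel sets of the distance to a convex set are convex), notes $C_2\subseteq E$, and concludes $\overline{\mathrm{co}}(C_2)\subseteq E$ because $\overline{\mathrm{co}}(C_2)$ is the smallest closed convex set containing $C_2$ in a Busemann convex space; the symmetric bound finishes the proof. You instead induct explicitly along the filtration $G_n$, using the two-step common-endpoint comparison to get $d\bigl((1-t)a+tb,(1-t)a'+tb'\bigr)\le (1-t)d(a,a')+t\,d(b,b')$ (legitimate here, since CAT$(0)$ spaces are uniquely geodesic, so the combination notation is unambiguous), an $\eta$-approximation to choose $a',b'$, and the fact that $H$ is unchanged under taking closures to pass from $\mathrm{co}$ to $\overline{\mathrm{co}}$. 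The two arguments rest on the same underlying estimate --- the paper's ``$E$ is convex'' is proved by exactly your two-geodesic comparison --- but the paper packages it into one known fact and thereby avoids both the approximation step and the endpoint-mismatch technicality, while your version is more self-contained and makes visible that convexity of $\mathrm{co}(C_2)$ itself needs no Busemann convexity (it holds in any geodesic space; Busemann convexity enters only for the estimate, and, in the paper's route, for the minimality of the \emph{closed} convex hull). One cosmetic slip: with the paper's conventions your base case should be $C_1$ itself (i.e.\ $G_0$, or equivalently you should treat $G_1(C_1)$ by one application of your inductive step with endpoints in $C_1$), since $G_1(C_1)$ already contains nontrivial segments; this does not affect the argument.
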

\begin{proof}
Let $c \in C_2$. Obviously, $\text{dist}\left(c,\overline{\text{co}}(C_1)\right) \le \text{dist}\left(c,C_1\right) \le H(C_1,C_2)$. Consider the set
\[E = \left\{y \in Y : \text{dist}\left(y,\overline{\text{co}}(C_1)\right) \le H(C_1,C_2)\right\},\]
which is a closed and convex set (by Busemann convexity). Since $C_2 \subseteq E$ it follows that $\overline{\text{co}}(C_2) \subseteq E$, from where $\sup_{c \in \overline{\text{co}}(C_2)} \text{dist}\left(c,\overline{\text{co}}(C_1)\right) \le H(C_1,C_2)$.
In a similar way we have that $\sup_{c \in \overline{\text{co}}(C_1)} \text{dist}\left(c,\overline{\text{co}}(C_2)\right) \le H(C_1,C_2)$ and we are done.
\end{proof}

\begin{theorem} \label{thm-cont-sel-nonexp-cv}
Let $\kappa \le 0$, $X$ a CBB$(\kappa)$ space, $Y$ a complete CAT$(\kappa)$ space and $A \subseteq X$ nonempty. Then there exists a continuous mapping $\alpha_c : \mathcal{N}(A,Y) \to \mathcal{N}(X,Y)$ such that for all $g \in \mathcal{N}(A,Y)$, $\alpha_c(g)(a) = g(a)$ for every $a \in A$ and $\alpha_c(g)(X) \subseteq \overline{\emph{co}}\left(g(A)\right)$.
\end{theorem}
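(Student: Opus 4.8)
The plan is to build $\alpha_c$ from the continuous selection $\alpha$ furnished by Theorem \ref{thm-cont-sel-nonexp}, by post-composing it with a metric projection onto the relevant convex hull. Concretely, for $g \in \mathcal{N}(A,Y)$ I write $C_g = \overline{\text{co}}(g(A))$ and set $\alpha_c(g) = P_{C_g} \circ \alpha(g)$. Since $g$ is bounded, $g(A)$ lies in some ball, and hence so does its convex hull (balls are convex in a Busemann convex space); thus $C_g$ is a nonempty, closed, bounded, convex and therefore complete subset of $Y$, so that, $Y$ being a complete CAT$(0)$ space, the projection $P_{C_g}$ is singlevalued and nonexpansive.

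First I would dispatch the algebraic requirements. The map $\alpha_c(g)$ is nonexpansive as a composition of the nonexpansive maps $\alpha(g)$ and $P_{C_g}$, and its image lies in $C_g = \overline{\text{co}}(g(A))$ by construction. It extends $g$ because for $a \in A$ one has $\alpha(g)(a) = g(a) \in g(A) \subseteq C_g$, so the projection fixes it: $\alpha_c(g)(a) = P_{C_g}(g(a)) = g(a)$.

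The heart of the matter is the continuity of $\alpha_c$, and this is the step I expect to be the main obstacle, precisely because the target set $C_g$ itself varies with $g$. Fixing $g_0$ and writing $f_0' = \alpha(g_0)$, $f' = \alpha(g)$, I would estimate, for each $x \in X$,
\[ d_Y(\alpha_c(g)(x), \alpha_c(g_0)(x)) \le d_Y\big(P_{C_g}(f'(x)), P_{C_g}(f_0'(x))\big) + d_Y\big(P_{C_g}(f_0'(x)), P_{C_{g_0}}(f_0'(x))\big). \]
The first term is at most $d_Y(f'(x), f_0'(x)) \le d_\infty(\alpha(g), \alpha(g_0))$ by nonexpansiveness of $P_{C_g}$, and it tends to $0$ uniformly in $x$ by continuity of $\alpha$ at $g_0$. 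The second term is where the two auxiliary lemmas enter: it measures the gap between the projections of the single point $f_0'(x)$ onto the two convex sets $C_g$ and $C_{g_0}$.

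To control that second term uniformly in $x$ I would invoke Lemma \ref{lemma-projections-balls}. The point being projected is $f_0'(x) = \alpha(g_0)(x)$, and since $\alpha(g_0) \in \mathcal{N}(X,Y)$ is bounded there exist $z \in Y$ and $r_2 > 0$, depending only on $g_0$, with $f_0'(X) \subseteq B(z, r_2)$; in particular $C_{g_0} \subseteq B(z, r_2)$. For $g$ near $g_0$ the values $g(a) = f'(a)$ are within $d_\infty(\alpha(g), \alpha(g_0))$ of $f_0'(a) \in B(z, r_2)$, so $C_g \subseteq B(z, r_2 + 1)$ once $d_\infty(\alpha(g),\alpha(g_0)) \le 1$. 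Thus both convex sets lie in the common ball of radius $r_1 = r_2 + 1$ while the projected point lies in $B(z, r_2)$, and Lemma \ref{lemma-projections-balls} gives $d_Y\big(P_{C_g}(f_0'(x)), P_{C_{g_0}}(f_0'(x))\big)^2 \le 2(r_1 + r_2)\, H(C_g, C_{g_0})$. Finally, Lemma \ref{lemma-Hausdorff-dist} together with the elementary bound $H(g(A), g_0(A)) \le \sup_{a \in A} d_Y(g(a), g_0(a))$ yields $H(C_g, C_{g_0}) \le d_\infty(g, g_0)$, so the right-hand side is at most $2(2r_2 + 1)\, d_\infty(g, g_0)$, a bound independent of $x$ that vanishes as $g \to g_0$. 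Combining the two terms shows $d_\infty(\alpha_c(g), \alpha_c(g_0)) \to 0$, i.e. $\alpha_c$ is continuous. The one subtlety to keep in mind is that $z, r_1, r_2$ depend on $g_0$; this is harmless, since continuity is local, and what is genuinely essential is that $\alpha(g_0)$ has bounded image, guaranteeing a uniform (in $x$) ball in which to apply Lemma \ref{lemma-projections-balls}. (Alternatively, one could apply Theorem \ref{thm-Horvath-sel} directly to the restricted multivalued map $g \mapsto \{f' \in \Phi(g) : f'(X) \subseteq C_g\}$, whose values are nonempty, closed and convex, the lower semi-continuity being established by essentially the same projection estimate.)
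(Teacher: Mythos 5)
Your proposal is correct and follows essentially the same route as the paper: define $\alpha_c(g) = P_{\overline{\text{co}}(g(A))} \circ \alpha(g)$ using the selection $\alpha$ from Theorem \ref{thm-cont-sel-nonexp}, split the continuity estimate by the triangle inequality into a varying-set term controlled by Lemma \ref{lemma-projections-balls} together with Lemma \ref{lemma-Hausdorff-dist} and a fixed-set term controlled by nonexpansiveness of the projection. Even your parenthetical alternative (applying Theorem \ref{thm-Horvath-sel} to $\Phi_c$) is noted by the authors after Theorem \ref{thm-lsc-Psi-c}; the only differences from the paper's proof are cosmetic (which point is projected onto which set in the decomposition, and bookkeeping of the radii).
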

\begin{proof}
By Theorem \ref{thm-cont-sel-nonexp} there exists a continuous $\alpha : \mathcal{N}(A,Y) \to \mathcal{N}(X,Y)$ such that for all $g \in \mathcal{N}(A,Y)$, $\alpha(g)$ extends $g$. Define a mapping $\alpha_c$ on $\mathcal{N}(A,Y)$ by
\[\alpha_c(g)(x) = P_{\overline{\text{co}}\left(g(A)\right)}\left(\alpha(g)(x)\right), \quad \text{ for each } g \in \mathcal{N}(A,Y) \text{ and } x \in X.\]
For each $g \in \mathcal{N}(A,Y)$, $\alpha_c(g) \in \mathcal{N}(X,Y)$ since the projection onto complete and convex subsets is nonexpansive. Clearly, $\alpha_c(g)(X) \subseteq \overline{\text{co}}\left(g(A)\right)$ and $\alpha_c(g)$ coincides with $g$ on $A$. 

Thus, we only need to prove that $\alpha_c$ is continuous. Let $f \in \mathcal{N}(A,Y)$ and $\varepsilon  > 0$. Since $\alpha$ is continuous, there exists $\delta_1 < 1$ such that for every $g \in \mathcal{N}(A,Y)$ with $d_\infty(f,g) < \delta_1$ we have that $d_\infty\left(\alpha(f),\alpha(g)\right) < \varepsilon/2$. Fix $z \in Y$. Let $r = \sup_{x \in X}d_Y\left(z,\alpha(f)(x)\right)$ and take $\delta = \min\left\{\delta_1, \frac{\varepsilon^2}{16(r+1)}\right\}.$ Let $g \in \mathcal{N}(A,Y)$ with $d_\infty(f,g) < \delta$. Then, for every $x \in X$,
\begin{align*}
d_Y\left(\alpha_c(f)(x),\alpha_c(g)(x)\right) & = d_Y\left(P_{\overline{\text{co}}\left(f(A)\right)}\left(\alpha(f)(x)\right),P_{\overline{\text{co}}\left(g(A)\right)}\left(\alpha(g)(x)\right)\right) \\
& \le d_Y\left(P_{\overline{\text{co}}\left(f(A)\right)}\left(\alpha(f)(x)\right),P_{\overline{\text{co}}\left(g(A)\right)}\left(\alpha(f)(x)\right)\right)\\ 
& \quad + d_Y\left(P_{\overline{\text{co}}\left(g(A)\right)}\left(\alpha(f)(x)\right),P_{\overline{\text{co}}\left(g(A)\right)}\left(\alpha(g)(x)\right)\right).
\end{align*}
Note that $\sup_{a \in A}d_Y(z,f(a)) \le r$ and $\sup_{a \in A}d_Y(z,g(a)) \le r + 1$. Apply Lemma \ref{lemma-projections-balls} with $C_1 = \overline{\text{co}}\left(f(A)\right)$, $C_2 = \overline{\text{co}}\left(g(A)\right)$ and $r_1=r_2=r+1$ to get that
\begin{align*}
d_Y\left(P_{\overline{\text{co}}\left(f(A)\right)}\left(\alpha(f)(x)\right),P_{\overline{\text{co}}\left(g(A)\right)}\left(\alpha(f)(x)\right)\right) & \le 2\sqrt{r+1}\sqrt{H\left(\overline{\text{co}}\left(f(A)\right), \overline{\text{co}}\left(g(A)\right)\right)}\\
& \le 2\sqrt{r+1}\sqrt{H\left(f(A),g(A)\right)}\\
& \qquad \text{by Lemma } \ref{lemma-Hausdorff-dist}\\
& \le 2\sqrt{r+1} \sqrt{\sup_{a \in A}d_Y(f(a),g(a))} \le \varepsilon/2.
\end{align*}
At the same time,
\begin{align*}
d_Y\left(P_{\overline{\text{co}}\left(g(A)\right)}\left(\alpha(f)(x)\right),P_{\overline{\text{co}}\left(g(A)\right)}\left(\alpha(g)(x)\right)\right) & \le d_Y\left(\alpha(f)(x),\alpha(g)(x)\right)\\
& \le d_\infty\left(\alpha(f),\alpha(g)\right) < \varepsilon/2.
\end{align*}
Hence, $d_\infty\left(\alpha_c(f),\alpha_c(g)\right) < \varepsilon$ which proves that $\alpha_c$ is continuous too.
\end{proof}

Following the same idea of proof one can give an analogous result for bounded Lipschitz mappings.

\begin{theorem} \label{thm-cont-sel-Lip-cv}
Let $X$ be a CBB$(0)$ space, $Y$ a complete CAT$(0)$ space and $A \subseteq X$ nonempty. Then there exists a continuous mapping $\beta_c : \mathcal{L}(A,Y) \to \mathcal{L}(X,Y)$ such that for all $g \in \mathcal{L}(A,Y)$, $\beta_c(g)(a) = g(a)$ for every $a \in A$, $\emph{Lip}(\beta_c(g),X) = \emph{Lip}(g,A)$ and $\beta_c(g)(X) \subseteq \overline{\emph{co}}\left(g(A)\right)$.
\end{theorem}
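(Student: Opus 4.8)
The plan is to imitate the proof of Theorem~\ref{thm-cont-sel-nonexp-cv} almost verbatim, replacing the nonexpansive selection by its Lipschitz counterpart. By Theorem~\ref{thm-cont-sel-Lip} there exists a continuous mapping $\beta : \mathcal{L}(A,Y) \to \mathcal{L}(X,Y)$ such that, for each $g \in \mathcal{L}(A,Y)$, $\beta(g)$ extends $g$ and $\text{Lip}(\beta(g),X) = \text{Lip}(g,A)$. I would then set
\[\beta_c(g)(x) = P_{\overline{\text{co}}\left(g(A)\right)}\left(\beta(g)(x)\right), \quad \text{for each } g \in \mathcal{L}(A,Y) \text{ and } x \in X,\]
where the projection is well defined and singlevalued because $\overline{\text{co}}\left(g(A)\right)$ is a closed (hence complete) convex subset of the complete CAT$(0)$ space $Y$. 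Since $g(A)$ is bounded and closed balls are convex, $\overline{\text{co}}\left(g(A)\right)$ is bounded, so each $\beta_c(g)$ maps into a bounded set and the first two required properties are immediate: $\beta_c(g)(X) \subseteq \overline{\text{co}}\left(g(A)\right)$ by construction, and for $a \in A$ the point $g(a)$ already lies in $\overline{\text{co}}\left(g(A)\right)$ and is therefore fixed by the projection, so $\beta_c(g)(a) = g(a)$.

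The one genuinely new step — and the place I expect to be the main obstacle — is verifying the equality $\text{Lip}(\beta_c(g),X) = \text{Lip}(g,A)$, rather than just an inequality. Since $P_{\overline{\text{co}}(g(A))}$ is nonexpansive, composing with it cannot increase the Lipschitz constant, giving $\text{Lip}(\beta_c(g),X) \le \text{Lip}(\beta(g),X) = \text{Lip}(g,A)$. For the reverse inequality I would invoke that $\beta_c(g)$ is an extension of $g$: restricting to $A \subseteq X$ can only decrease the smallest Lipschitz constant, whence $\text{Lip}(g,A) = \text{Lip}(\beta_c(g)|_A,A) \le \text{Lip}(\beta_c(g),X)$. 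Combining the two bounds yields the desired equality. This subtlety does not appear in the nonexpansive case, where only the condition $\text{Lip} \le 1$ has to be preserved, which the nonexpansive projection achieves automatically.

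Finally, I would establish continuity of $\beta_c$ by following the continuity argument of Theorem~\ref{thm-cont-sel-nonexp-cv} line for line, with $\alpha$ replaced by $\beta$. Fixing $f \in \mathcal{L}(A,Y)$ and $\varepsilon > 0$, I would use the continuity of $\beta$ to control $d_\infty(\beta(f),\beta(g))$, set $r = \sup_{x \in X} d_Y\left(z,\beta(f)(x)\right)$, and split $d_Y\left(\beta_c(f)(x),\beta_c(g)(x)\right)$ into the distance between the two projections of $\beta(f)(x)$ (onto $\overline{\text{co}}(f(A))$ and onto $\overline{\text{co}}(g(A))$) plus the distance between the projections of $\beta(f)(x)$ and $\beta(g)(x)$ onto the common set $\overline{\text{co}}(g(A))$. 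The first term is handled by Lemma~\ref{lemma-projections-balls} (with $C_1 = \overline{\text{co}}(f(A))$, $C_2 = \overline{\text{co}}(g(A))$, $r_1 = r_2 = r+1$) together with Lemma~\ref{lemma-Hausdorff-dist}, which passes from $H\left(\overline{\text{co}}(f(A)),\overline{\text{co}}(g(A))\right)$ to $H(f(A),g(A)) \le d_\infty(f,g)$; the second term is at most $d_\infty(\beta(f),\beta(g))$ by nonexpansivity of the projection. Crucially, these estimates use only the boundedness of $\beta(f)$ and the continuity of $\beta$, never the nonexpansivity that was present in the earlier theorem, so the argument transfers without change and gives $d_\infty\left(\beta_c(f),\beta_c(g)\right) < \varepsilon$ for $g$ sufficiently close to $f$.
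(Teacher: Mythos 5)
Your proposal is correct and is essentially the paper's own argument: the paper proves Theorem \ref{thm-cont-sel-Lip-cv} simply by noting it follows the proof of Theorem \ref{thm-cont-sel-nonexp-cv}, i.e., composing the continuous selection $\beta$ from Theorem \ref{thm-cont-sel-Lip} with the metric projection onto $\overline{\text{co}}\left(g(A)\right)$ and repeating the continuity estimates via Lemmas \ref{lemma-projections-balls} and \ref{lemma-Hausdorff-dist}. Your explicit verification of $\text{Lip}(\beta_c(g),X) = \text{Lip}(g,A)$ (nonexpansivity of the projection for one inequality, restriction to $A$ for the other) is exactly the small additional detail the Lipschitz case requires.
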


In fact one can also consider the multivalued extension mappings:
\begin{itemize}
\item $\Phi_c : \mathcal{N}(A,Y) \to \oldcal{P}\left(\mathcal{N}(X,Y)\right)$ which assigns to each nonexpansive mapping $f \in \mathcal{N}(A,Y)$ all its nonexpansive extensions $f' \in \mathcal{N}(X,Y)$ with $f'(X) \subseteq \overline{\text{co}}(f(A))$.
\item $\Psi_c : \mathcal{L}(A,Y) \to \oldcal{P}\left(\mathcal{L}(X,Y)\right)$ which assigns to each Lipschitz mapping $f \in \mathcal{L}(A,Y)$ all its Lipschitz extensions $f' \in \mathcal{L}(X,Y)$ with $\text{Lip}(f,A) = \text{Lip}(f',X)$ and $f'(X) \subseteq \overline{\text{co}}(f(A))$.
\end{itemize}

These mappings, too, will be lower semi-continuous.

\begin{theorem} \label{thm-lsc-Phi-c}
Let $\kappa \le 0$, $X$ a CBB$(\kappa)$ space, $Y$ a complete CAT$(\kappa)$ space and $A \subseteq X$ nonempty. Then the mapping $\Phi_c : \mathcal{N}(A,Y) \to \oldcal{P}\left(\mathcal{N}(X,Y)\right)$ is lower semi-continuous.
\end{theorem}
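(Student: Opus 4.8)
The plan is to verify lower semi-continuity through the standard characterization for multivalued maps into a metric space: it suffices to show that for each $f \in \mathcal{N}(A,Y)$, each $f' \in \Phi_c(f)$, and each $\varepsilon > 0$, there is $\delta > 0$ such that every $g \in \mathcal{N}(A,Y)$ with $d_\infty(f,g) < \delta$ admits an extension $g' \in \Phi_c(g)$ with $d_\infty(f',g') \le \varepsilon$. Indeed, applying this with $V$ an open ball around $f'$ shows that the set $\{g : \Phi_c(g) \cap V \ne \emptyset\}$ contains a neighborhood of each of its points, hence is open. So the task reduces to producing, for every nearby $g$, a constrained extension close to the fixed $f'$. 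Note that since $\kappa \le 0$, the space $Y$ is in particular a complete CAT$(0)$ space, so the projection machinery and the CAT$(0)$ lemmas above are available.

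First I would discard the convexity constraint and invoke Lemma \ref{lemma-lsc-Phi} applied to $f' \in \mathcal{N}(X,Y)$: since $f'|_A = f$, given $\varepsilon$ there is $\delta_0 > 0$ so that every $g \in \mathcal{N}(A,Y)$ with $\sup_{a \in A} d_Y(f'(a),g(a)) = d_\infty(f,g) < \delta_0$ has a nonexpansive extension $g'' \in \mathcal{N}(X,Y)$ with $d_\infty(f',g'') \le \varepsilon/2$. To reinstate the convexity condition I would then project, setting $g' = P_{\overline{\text{co}}(g(A))} \circ g''$. The set $\overline{\text{co}}(g(A))$ is convex (by Busemann convexity) and complete (being closed in the complete space $Y$), so the metric projection is single-valued and nonexpansive; hence $g' \in \mathcal{N}(X,Y)$ and $g'(X) \subseteq \overline{\text{co}}(g(A))$. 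Moreover $g'$ extends $g$: for $a \in A$ we have $g''(a) = g(a) \in g(A) \subseteq \overline{\text{co}}(g(A))$, so the projection fixes this value and $g'(a) = g(a)$. Thus $g' \in \Phi_c(g)$.

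It then remains to estimate $d_\infty(f',g')$. Fixing $x \in X$ and inserting the intermediate point $P_{\overline{\text{co}}(g(A))}(f'(x))$, I would bound
\[ d_Y(f'(x),g'(x)) \le d_Y\big(f'(x),P_{\overline{\text{co}}(g(A))}(f'(x))\big) + d_Y\big(P_{\overline{\text{co}}(g(A))}(f'(x)),P_{\overline{\text{co}}(g(A))}(g''(x))\big). \]
By nonexpansivity of the projection the second term is at most $d_Y(f'(x),g''(x)) \le d_\infty(f',g'') \le \varepsilon/2$. For the first term, since $f' \in \Phi_c(f)$ we have $f'(x) \in \overline{\text{co}}(f(A))$, so this distance equals $\text{dist}\big(f'(x),\overline{\text{co}}(g(A))\big) \le H\big(\overline{\text{co}}(f(A)),\overline{\text{co}}(g(A))\big) \le H(f(A),g(A)) \le d_\infty(f,g)$, where the middle inequality is Lemma \ref{lemma-Hausdorff-dist}. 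Taking $\delta = \min\{\delta_0,\varepsilon/2\}$ then yields $d_\infty(f',g') \le \varepsilon$, which completes the argument.

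The construction is essentially routine given Lemma \ref{lemma-lsc-Phi} and the projection estimates already developed for Theorem \ref{thm-cont-sel-nonexp-cv}; the only point requiring care is the first-term estimate, where the constraint forces a comparison of projections onto the two moving target sets $\overline{\text{co}}(f(A))$ and $\overline{\text{co}}(g(A))$. The device that keeps this clean is to route the comparison through the point $f'(x)$ itself, which already lies in $\overline{\text{co}}(f(A))$: this converts the set-dependence into a plain distance-to-set bound controlled by Lemma \ref{lemma-Hausdorff-dist}, so that (unlike in Theorem \ref{thm-cont-sel-nonexp-cv}) one avoids the square-root bound of Lemma \ref{lemma-projections-balls} altogether.
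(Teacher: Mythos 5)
Your proposal is correct and takes essentially the same route as the paper: both proofs obtain an unconstrained nearby extension from Lemma \ref{lemma-lsc-Phi} and then enforce the convexity constraint by composing with the metric projection onto $\overline{\text{co}}\left(g(A)\right)$, which fixes $g$ on $A$ and is nonexpansive. The only difference is in the bookkeeping of the final estimate: where you route through the projected point $P_{\overline{\text{co}}\left(g(A)\right)}(f'(x))$ and invoke Lemma \ref{lemma-Hausdorff-dist} (legitimately, since the constraint $f'(x) \in \overline{\text{co}}\left(f(A)\right)$ turns the comparison into a plain distance-to-set bound), the paper bounds $d_Y(g_1(x),g'(x))$ via the nearest-point property and re-derives the same control inline using the closed convex thickening $E = \left\{y \in Y : \text{dist}\left(y,\overline{\text{co}}(g(A))\right) \le \varepsilon/3\right\}$ --- which is precisely the argument behind Lemma \ref{lemma-Hausdorff-dist}.
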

\begin{proof}
We show that for every $f \in \mathcal{N}(X,Y)$ with $f(X) \subseteq \overline{\text{co}}(f(A))$ and for every $\varepsilon > 0$ there exists $\delta > 0$ such that every $g \in \mathcal{N}(A,Y)$ with $\sup_{a \in A}d_Y(f(a),g(a)) < \delta$ admits an extension $g' \in  \mathcal{N}(X,Y)$ with $g'(X) \subseteq \overline{\text{co}}(g(A))$ and $d_\infty(f,g') \le \varepsilon$.

Let $f$ be as above and $\varepsilon > 0$. By Lemma \ref{lemma-lsc-Phi} there exists $\delta > 0$ such that every $g \in \mathcal{N}(A,Y)$ with $\sup_{a \in A}d_Y(f(a),g(a)) < \delta$ admits an extension $g_1 \in  \mathcal{N}(X,Y)$ with $d_\infty(f,g_1) \le \varepsilon/3$.

Define $g':X \to Y$, $g'(x) = P_{\overline{\text{co}}\left(g(A)\right)}\left(g_1(x)\right)$. Clearly, $g'$ is nonexpansive, extends $g$ and $g'(X) \subseteq \overline{\text{co}}(g(A))$. Let $x \in X$. Then,
\begin{equation} \label{thm-lsc-Psi-c-eq1}
d_Y(f(x),g'(x)) \le d_Y(f(x),g_1(x)) + d_Y(g_1(x),g'(x)) \le \varepsilon/3 + d_Y(g_1(x),g'(x)).
\end{equation}
For every $y \in \overline{\text{co}}(g(A))$ we have that
\[d_Y(g_1(x),g'(x)) \le d_Y(g_1(x),y) \le d_Y(g_1(x),f(x)) + d_Y(f(x),y),\]
from where
\[d_Y(g_1(x),g'(x)) \le \varepsilon/3 + \text{dist}\left(f(x),\overline{\text{co}}(g(A))\right).\]
Consider $E = \left\{y \in Y : \text{dist}\left(y,\overline{\text{co}}(g(A))\right) \le \varepsilon/3\right\}$. We know that $f(A) \subseteq E$ since for any $a \in A$, 
\[\text{dist}\left(f(a),\overline{\text{co}}(g(A))\right) \le d_Y(f(a),g(a)) \le \varepsilon/3.\]
Since $E$ is closed and convex we have that $\overline{\text{co}}(f(A)) \subseteq E$. But $f(X) \subseteq \overline{\text{co}}(f(A))$ and so $f(x) \in E$. Thus, $\text{dist}\left(f(x),\overline{\text{co}}(g(A))\right) \le \varepsilon/3$ which implies that $d_Y(g_1(x),g'(x)) \le 2\varepsilon/3$. Using (\ref{thm-lsc-Psi-c-eq1}), we obtain that $d_Y(f(x),g'(x)) \le \varepsilon$.
\end{proof}

The same argument yields the result for bounded Lipschitz mappings.

\begin{theorem}\label{thm-lsc-Psi-c}
Let $X$ be a CBB$(0)$ space, $Y$ a complete CAT$(0)$ space and $A \subseteq X$ nonempty. Then the mapping $\Psi_c : \mathcal{L}(A,Y) \to \oldcal{P}\left(\mathcal{L}(X,Y)\right)$ is lower semi-continuous.
\end{theorem}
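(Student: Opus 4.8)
The plan is to imitate the proof of Theorem \ref{thm-lsc-Phi-c} line by line, swapping the nonexpansive input (Lemma \ref{lemma-lsc-Phi}) for its Lipschitz counterpart (Lemma \ref{lemma-lsc-Psi}). As in that proof, I would reduce lower semi-continuity of $\Psi_c$ to the following pointwise statement: for every $f \in \mathcal{L}(X,Y)$ with $\text{Lip}(f,A) = \text{Lip}(f,X)$ and $f(X) \subseteq \overline{\text{co}}(f(A))$, and every $\varepsilon > 0$, there is a $\delta > 0$ such that each $g \in \mathcal{L}(A,Y)$ with $\sup_{a \in A} d_Y(f(a),g(a)) < \delta$ admits an extension $g' \in \mathcal{L}(X,Y)$ with $\text{Lip}(g',X) = \text{Lip}(g,A)$, $g'(X) \subseteq \overline{\text{co}}(g(A))$ and $d_\infty(f,g') \le \varepsilon$. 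This is exactly the local formulation of lower semi-continuity once one notes that any $f$ lying in the image of $\Psi_c$ automatically satisfies $\text{Lip}(f,A) = \text{Lip}(f,X)$, so the hypothesis of Lemma \ref{lemma-lsc-Psi} is met for free.

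First I would apply Lemma \ref{lemma-lsc-Psi} to $f$ to obtain $\delta > 0$ and, for each admissible $g$, an extension $g_1 \in \mathcal{L}(X,Y)$ with $\text{Lip}(g_1,X) = \text{Lip}(g,A)$ and $d_\infty(f,g_1) \le \varepsilon/3$. Then I would define $g'(x) = P_{\overline{\text{co}}(g(A))}(g_1(x))$, just as for $\Phi_c$. By construction $g'(X) \subseteq \overline{\text{co}}(g(A))$; and since $g_1$ agrees with $g$ on $A$ and every point of $g(A) \subseteq \overline{\text{co}}(g(A))$ is fixed by the projection, $g'$ extends $g$.

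The one place where the argument genuinely departs from the nonexpansive case, and the step I would single out as requiring attention, is the verification that $\text{Lip}(g',X) = \text{Lip}(g,A)$ holds exactly rather than merely $\text{Lip}(g',X) \le \text{Lip}(g,A)$. Since the metric projection onto a complete convex subset of a CAT$(0)$ space is nonexpansive, composition gives $\text{Lip}(g',X) \le \text{Lip}(g_1,X) = \text{Lip}(g,A)$; the reverse inequality follows because $g'|_A = g$, so $\text{Lip}(g',X) \ge \text{Lip}(g',A) = \text{Lip}(g,A)$. This two-sided bound replaces the single nonexpansivity observation that sufficed for $\Phi_c$.

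Finally, the estimate $d_\infty(f,g') \le \varepsilon$ is identical to the one in Theorem \ref{thm-lsc-Phi-c}: bound $d_Y(f(x),g'(x)) \le \varepsilon/3 + d_Y(g_1(x),g'(x))$, then $d_Y(g_1(x),g'(x)) \le \varepsilon/3 + \text{dist}(f(x),\overline{\text{co}}(g(A)))$ by testing the projection against an arbitrary point of the convex hull, and conclude that the distance term is at most $\varepsilon/3$ because $E = \{y \in Y : \text{dist}(y,\overline{\text{co}}(g(A))) \le \varepsilon/3\}$ is closed and convex (Busemann convexity of $Y$), contains $f(A)$, and hence contains $\overline{\text{co}}(f(A)) \supseteq f(X)$. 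Summing the three thirds yields the claim, and no geometric ingredient beyond the CAT$(0)$/Busemann convexity already used for $\Phi_c$ enters.
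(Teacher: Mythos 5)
Your proposal is correct and is exactly the route the paper takes: its proof of Theorem \ref{thm-lsc-Psi-c} consists of the single remark that ``the same argument'' as in Theorem \ref{thm-lsc-Phi-c} applies, i.e., run the projection construction $g' = P_{\overline{\text{co}}(g(A))} \circ g_1$ with Lemma \ref{lemma-lsc-Psi} substituted for Lemma \ref{lemma-lsc-Phi}, which is what you do. You also correctly supply the one detail the paper leaves implicit --- the two-sided verification that $\text{Lip}(g',X) = \text{Lip}(g,A)$, combining nonexpansivity of the metric projection with $g'|_A = g$ --- as well as the observation that any $f' \in \Psi_c(f)$ automatically satisfies the hypothesis $\text{Lip}(f',A) = \text{Lip}(f',X)$ of Lemma \ref{lemma-lsc-Psi}.
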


Note that one could apply, as in Section \ref{sect-lsc-cont-sel}, Theorem \ref{thm-Horvath-sel} to the mappings $\Phi_c$ and $\Psi_c$ to obtain directly Theorems \ref{thm-cont-sel-nonexp-cv} and \ref{thm-cont-sel-Lip-cv}, respectively.

\begin{remark} 
Similar results to the ones given in this section can be proved when $Y$ is a complete $\mathbb{R}$-tree and $X$ is a general metric space.
\end{remark}

\section{Nonexpansive selections in hyperconvex metric spaces}\label{hyperconvexity}

We prove next that results for mappings $\Phi$ and $\Phi_c$ from previous sections  can be strengthened if the target space is a hyperconvex metric space.  Indeed, we will show that extensions of nonexpansive (Lipschitz) mappings can be chosen to be not only continuously but in a nonexpansive (or Lipschitz) way. Our result will follow as a direct application of the next result \cite[Theorem 1]{KhaKirMar00} (see also \cite[Theorem 1]{Sin89}). The class of externally hyperconvex subsets of a metric space $X$, defined in Section \ref{prelim}, is denoted by ${\mathcal E}(X)$.

\begin{theorem}[Khamsi, Kirk, Mart\'inez-Y\'a\~{n}ez \cite{KhaKirMar00}]\label{Kha00}
Let $X$ be a metric space and $Y$ a hyperconvex metric space. If $T\colon X\to {\mathcal E}(Y)$ is a multivalued mapping, then there exists a selection mapping $f\colon X\to Y$ of $T$ such that $d(f(x),f(y))\le H(T(x),T(y))$.
\end{theorem}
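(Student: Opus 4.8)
The plan is to construct the selection by a Zorn's lemma argument, building $f$ on larger and larger subsets of $X$ and invoking external hyperconvexity of the value sets at each step to guarantee that a compatible value is available. First I would consider the family $\mathcal{F}$ of all pairs $(D,f_D)$ in which $D \subseteq X$ is nonempty and $f_D : D \to Y$ is a partial selection of $T$ (that is, $f_D(x) \in T(x)$ for every $x \in D$) satisfying the compatibility condition $d(f_D(x),f_D(y)) \le H(T(x),T(y))$ for all $x,y \in D$. This family is nonempty, since choosing any $x_1 \in X$ and any point of $T(x_1)$ yields a singleton selection. I would order $\mathcal{F}$ by extension. Every chain has an upper bound, obtained by taking the union of the domains together with the common extension of the maps; the compatibility condition is inherited because any two points of the union already lie in a single member of the chain. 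Zorn's lemma then produces a maximal element $(D,f)$.

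It remains to prove $D = X$. Suppose, for contradiction, that there is some $x_0 \in X \setminus D$. I would look for a point $y_0 \in T(x_0)$ with $d(y_0,f(x)) \le H(T(x_0),T(x))$ for every $x \in D$, since adjoining $(x_0,y_0)$ would then contradict the maximality of $(D,f)$. Such a $y_0$ is exactly a point of $T(x_0) \cap \bigcap_{x \in D} B\!\left(f(x),H(T(x_0),T(x))\right)$, and the existence of a point in this intersection is precisely what external hyperconvexity of $E := T(x_0)$ delivers, once its two hypotheses are checked for the centers $\{f(x)\}_{x \in D}$ and the radii $\{r_x := H(T(x_0),T(x))\}_{x \in D}$.

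The key step, and essentially the only one requiring verification, is confirming these two hypotheses from the definition of external hyperconvexity recalled in Section \ref{prelim}. The pairwise condition $d(f(x),f(x')) \le r_x + r_{x'}$ follows from the compatibility of $f$ together with the triangle inequality for the Pompeiu--Hausdorff distance, via $d(f(x),f(x')) \le H(T(x),T(x')) \le H(T(x),T(x_0)) + H(T(x_0),T(x')) = r_x + r_{x'}$. The proximity condition $\text{dist}(f(x),E) \le r_x$ is immediate from the definition of $H$: since $f(x) \in T(x)$, one has $\text{dist}(f(x),T(x_0)) \le \sup_{z \in T(x)} \text{dist}(z,T(x_0)) \le H(T(x),T(x_0)) = r_x$. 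With both hypotheses in place, external hyperconvexity of $T(x_0)$ guarantees a nonempty intersection, producing the required $y_0$ and contradicting maximality; hence $D = X$ and $f$ is the desired selection. The argument is not technically difficult once external hyperconvexity is identified as the right tool; the only real subtlety is recognizing that the Hausdorff-nonexpansivity imposed on the partial selection is exactly what converts, through the triangle inequality for $H$, into the admissibility condition $d(x_\alpha,x_\beta) \le r_\alpha + r_\beta$ demanded by external hyperconvexity.
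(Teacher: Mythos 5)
Your proposal is correct: the two conditions you verify, $d(f(x),f(x')) \le r_x + r_{x'}$ via the triangle inequality for $H$ and $\mathrm{dist}(f(x),T(x_0)) \le r_x$ from $f(x) \in T(x)$, are exactly the hypotheses in the definition of external hyperconvexity recalled in Section \ref{prelim}, and the Zorn's lemma extension argument then closes without difficulty. Note that the paper itself gives no proof of this statement, importing it from \cite{KhaKirMar00}; your argument is essentially the proof given in that reference, and it also matches in style the point-by-point transfinite constructions the paper carries out in Lemmas \ref{set-nonexp} and \ref{exthyp}.
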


This theorem implies in particular that if the multivalued mapping $T$ is nonexpansive (Lipschitz) then $f$ can be chosen nonexpansive (Lipschitz). The next fact we need is that the set ${\mathcal N} (X,Y)$ endowed with the supremum distance is hyperconvex. This is basically due to \cite[Theorem 3]{KhaKirMar00} where the result is proved for ${\mathcal N} (Y,Y)$ with $Y$ hyperconvex, but the proof carries over with no modification to our case. We state the result as it is given in \cite{KhaKirMar00}.

\begin{theorem}[Khamsi, Kirk, Mart\'inez-Y\'a\~{n}ez \cite{KhaKirMar00}]\label{3Kha00}
Let $Y$ be hyperconvex and $\lambda >0$. Let $\lambda (Y,Y)$ be the family of all bounded $\lambda$-Lipschitz mappings from $Y$ into $Y$. Then $\lambda (Y,Y)$ is hyperconvex endowed with the supremum distance.
\end{theorem}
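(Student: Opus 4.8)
The plan is to realize the hyperconvexity of $\lambda(Y,Y)$ as a selection problem and invoke Theorem \ref{Kha00}. Concretely, let $\{f_\alpha\} \subseteq \lambda(Y,Y)$ and positive reals $\{r_\alpha\}$ satisfy $d_\infty(f_\alpha, f_\beta) \le r_\alpha + r_\beta$ for all $\alpha, \beta$; we must produce $g \in \lambda(Y,Y)$ with $d_\infty(g, f_\alpha) \le r_\alpha$ for every $\alpha$, that is, $g \in \bigcap_\alpha B(f_\alpha, r_\alpha)$. The idea is to solve the problem pointwise in $Y$ and then glue the pointwise solutions into a single $\lambda$-Lipschitz map. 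For each $y \in Y$ set
\[ T(y) = \bigcap_\alpha B\bigl(f_\alpha(y), r_\alpha\bigr) \subseteq Y. \]

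First I would check that each $T(y)$ is a nonempty externally hyperconvex subset of $Y$, so that $T : Y \to \mathcal{E}(Y)$ is a legitimate multivalued map. Nonemptiness follows from the hyperconvexity of $Y$, since $d(f_\alpha(y), f_\beta(y)) \le d_\infty(f_\alpha, f_\beta) \le r_\alpha + r_\beta$. External hyperconvexity is the standard fact that admissible sets (arbitrary intersections of closed balls) in a hyperconvex space are externally hyperconvex (see \cite{EspKha01}); alternatively it can be verified directly: given a family $\{x_\gamma\}$ and radii $\{s_\gamma\}$ with $d(x_\gamma, x_{\gamma'}) \le s_\gamma + s_{\gamma'}$ and $\text{dist}(x_\gamma, T(y)) \le s_\gamma$, one obtains the cross condition $d(x_\gamma, f_\alpha(y)) \le \text{dist}(x_\gamma, T(y)) + r_\alpha \le s_\gamma + r_\alpha$ by passing through any point of $T(y)$, and then hyperconvexity of $Y$ furnishes a point in $\bigcap_\gamma B(x_\gamma, s_\gamma) \cap T(y)$.

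The main step is to show that $T$ is $\lambda$-Lipschitz for the Pompeiu--Hausdorff distance, i.e. $H(T(y), T(y')) \le \lambda d(y, y')$. Fixing $p \in T(y)$, I would produce a comparison point $q \in T(y')$ with $d(p,q) \le \lambda d(y,y')$ by intersecting in $Y$ the balls $\{B(f_\alpha(y'), r_\alpha)\}_\alpha$ together with the single ball $B\bigl(p, \lambda d(y,y')\bigr)$. All pairwise admissibility conditions hold: $d(f_\alpha(y'), f_\beta(y')) \le r_\alpha + r_\beta$ as before, and
\[ d(f_\alpha(y'), p) \le d(f_\alpha(y'), f_\alpha(y)) + d(f_\alpha(y), p) \le \lambda d(y,y') + r_\alpha, \]
using that each $f_\alpha$ is $\lambda$-Lipschitz and $p \in T(y)$. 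Hyperconvexity of $Y$ then yields a common point $q$, which by construction lies in $T(y')$ and satisfies $d(p,q) \le \lambda d(y,y')$. By symmetry this bounds both one-sided Hausdorff distances, giving $H(T(y),T(y')) \le \lambda d(y,y')$.

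Finally I would apply Theorem \ref{Kha00} to $T : Y \to \mathcal{E}(Y)$ to obtain a selection $g : Y \to Y$ with $d(g(y), g(y')) \le H(T(y), T(y')) \le \lambda d(y,y')$, so that $\text{Lip}(g,Y) \le \lambda$; moreover $g(y) \in T(y)$ means $d(g(y), f_\alpha(y)) \le r_\alpha$ for all $y$ and $\alpha$, i.e. $d_\infty(g, f_\alpha) \le r_\alpha$. Boundedness of $g$ is immediate, since $g$ stays within $r_{\alpha_0}$ of the bounded map $f_{\alpha_0}$ for any fixed index $\alpha_0$, whence $\text{diam}(g(Y)) \le \text{diam}(f_{\alpha_0}(Y)) + 2r_{\alpha_0}$. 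Thus $g \in \lambda(Y,Y)$ and $g \in \bigcap_\alpha B(f_\alpha, r_\alpha)$, establishing hyperconvexity of $\lambda(Y,Y)$. I expect the Hausdorff--Lipschitz estimate to be the crux: it is precisely where the $\lambda$-Lipschitz property of the $f_\alpha$ and the hyperconvexity of $Y$ combine, and it is what allows the pointwise solutions to be assembled into one genuinely $\lambda$-Lipschitz map rather than merely a pointwise-defined selection.
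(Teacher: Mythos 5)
Your proof is correct and takes essentially the same route as the source the paper relies on: the paper itself gives no proof of this statement beyond citing \cite[Theorem 3]{KhaKirMar00} (remarking that the argument for $\mathcal{N}(Y,Y)$ carries over), and that argument is precisely yours --- form the pointwise admissible sets $T(y)=\bigcap_\alpha B(f_\alpha(y),r_\alpha)$, verify that they are nonempty, externally hyperconvex, and satisfy $H(T(y),T(y'))\le \lambda d_Y(y,y')$, then apply the selection theorem (Theorem \ref{Kha00}) to glue the pointwise solutions into a single $\lambda$-Lipschitz map in $\bigcap_\alpha B(f_\alpha,r_\alpha)$. All the individual steps check out, including the cross-condition estimate $d(x_\gamma,f_\alpha(y))\le s_\gamma+r_\alpha$ for external hyperconvexity, the Hausdorff--Lipschitz bound obtained via the auxiliary ball $B\bigl(p,\lambda d_Y(y,y')\bigr)$, and the boundedness of the selection.
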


Next we show that the mapping $\Phi$ is nonexpansive.

\begin{lemma}\label{set-nonexp}
Let $X$ be a metric space, $A\subseteq X$ nonempty and $Y$ a hyperconvex metric space. Then $\Phi$ is a nonexpansive multivalued mapping.
\end{lemma}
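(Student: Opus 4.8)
The plan is to unwind the definition of the Pompeiu--Hausdorff distance on $(\mathcal{N}(X,Y),d_\infty)$ and reduce the statement $H(\Phi(f),\Phi(g)) \le d_\infty(f,g)$ to a one-sided estimate. Writing $r = d_\infty(f,g)$, by the symmetry of the roles of $f$ and $g$ it suffices to prove that $\sup_{f' \in \Phi(f)} \text{dist}(f',\Phi(g)) \le r$, that is, that every nonexpansive extension $f' \in \Phi(f)$ of $f$ can be matched by some nonexpansive extension $g' \in \Phi(g)$ of $g$ with $d_\infty(f',g') \le r$. Both sets are nonempty by Theorem \ref{thm-hyp}, and any such $g'$ is automatically bounded, being within $r$ of the bounded map $f'$. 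Observe first that on $A$ the target is already consistent: for $a \in A$ one has $d_Y(f'(a),g(a)) = d_Y(f(a),g(a)) \le r$.

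To build $g'$ I would run the Aronszajn--Panitchpakdi one-point extension argument inside $Y$, carrying along the extra ball constraint $d_Y(g'(x),f'(x)) \le r$. Consider the family of pairs $(A_0,g_0)$ with $A \subseteq A_0 \subseteq X$, where $g_0$ is a nonexpansive map extending $g$ and satisfying $d_Y(g_0(a),f'(a)) \le r$ for every $a \in A_0$, ordered by extension; unions of chains stay in the family, so Zorn's lemma yields a maximal element. If its domain $A_0$ were not all of $X$, pick $x_0 \in X \setminus A_0$ and seek a value $p = g_0(x_0)$ lying in $\bigcap_{a \in A_0} B(g_0(a),d_X(x_0,a)) \cap B(f'(x_0),r)$. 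By hyperconvexity of $Y$ this intersection is nonempty as soon as the centers pairwise satisfy the ball-compatibility inequalities, and these are exactly: $d_Y(g_0(a),g_0(a')) \le d_X(x_0,a)+d_X(x_0,a')$, which follows from nonexpansiveness of $g_0$ and the triangle inequality in $X$; and $d_Y(g_0(a),f'(x_0)) \le d_X(x_0,a)+r$, which follows from $d_Y(g_0(a),f'(x_0)) \le d_Y(g_0(a),f'(a)) + d_Y(f'(a),f'(x_0)) \le r + d_X(a,x_0)$, using the constraint together with nonexpansiveness of $f'$. Any such $p$ extends $g_0$ one step further while preserving both properties, contradicting maximality; hence $A_0 = X$ and the maximal map is the desired $g'$.

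This produces $g' \in \Phi(g)$ with $d_\infty(f',g') \le r$, so $\text{dist}(f',\Phi(g)) \le r$; taking the supremum over $f' \in \Phi(f)$ and arguing symmetrically gives $H(\Phi(f),\Phi(g)) \le r = d_\infty(f,g)$, i.e. $\Phi$ is nonexpansive. I expect the only point requiring care to be the verification of the mixed ball-compatibility inequality involving the constraint ball $B(f'(x_0),r)$: everything hinges on propagating the bound $d_Y(g_0(\cdot),f'(\cdot)) \le r$ through the extension via the triangle inequality and the nonexpansiveness of $f'$, while the Zorn/one-point-extension scaffolding and the reduction to a one-sided Hausdorff estimate are routine.
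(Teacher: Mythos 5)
Your proposal is correct and follows essentially the same route as the paper: you build $g'$ by the one-point extension argument in the hyperconvex target, intersecting the balls $B(g_0(a),d_X(x_0,a))$ with the extra constraint ball $B(f'(x_0),r)$, and the pairwise compatibility inequalities you verify are exactly those checked in the paper's proof. Your use of Zorn's lemma on maximal partial extensions is just a packaging of the ``standard transfinite argument'' the paper invokes, so the two proofs are the same in substance.
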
 

\begin{proof}
Let $f,g\in {\mathcal N}(A, Y)$ and $f'\in \Phi (f)$. We need to show that there exists $g'\in \Phi (g)$ such that $d_\infty (f',g')\le d_\infty(f,g)$. We will construct $g'$ point by point beginning with $g'(a)=g(a)$ for all $a\in A$. Let $r=d_\infty(f,g)$, $x_0\in X\setminus A$ and consider the intersection
\[A_0=\left(\bigcap_{a\in A}B(g'(a),d(a,x_0))\right)\bigcap B(f'(x_0),r).\]
It is easy to see that these balls have nonempty intersection two-by-two and so, by hyperconvexity of $Y$, $A_0 \ne \emptyset$. Define $g'(x_0)$ as any element in $A_0$.

Let $x_1\in X\setminus (A\cup \{x_0\})$ and consider now
\[A_1=\left(\bigcap_{a\in A}B(g'(a),d(a,x_1))\right)\bigcap B(g'(x_0),d(x_0,x_1))\bigcap B(f'(x_1),r).\]
Checking distances between centers, with $a, a_1, a_2\in A$, we have that:
\begin{align*}
d(g'(a_1),g'(a_2)) &\le d(a_1,a_2)\le d(a_1,x_1)+d(a_2,x_1),\\
d(g'(a),g'(x_0))&\le d(a,x_0)\le d(a,x_1)+d(x_0,x_1),\\
d(g'(a),f'(x_1))&\le d(g'(a),f'(a))+d(f'(a),f'(x_1))\le r+d(a,x_1),\\
d(g'(x_0),f'(x_1))&\le d(g'(x_0),f'(x_0))+d(f'(x_0),f'(x_1))\le r+d(x_0,x_1).
\end{align*}
Hence, by hyperconvexity of $Y$, we have that $A_1$ is nonempty. Choose $g'(x_1)$ as any point in $A_1$. The proof is completed after a standard transfinite argument. We omit further details.
\end{proof}

 
 
To be able to apply Theorem \ref{Kha00} we still need the values of $\Phi$ to be externally hyperconvex. 
 
\begin{lemma} \label{exthyp}
Let $X$ be a metric space, $A\subseteq X$ nonempty and $Y$ a hyperconvex metric space. Then $\Phi (f)$ is externally hyperconvex in ${\mathcal N}(X,Y)$ for every $f\in {\mathcal N}(A,Y)$.
\end{lemma}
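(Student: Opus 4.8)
The plan is to unwind the definition of external hyperconvexity and then reproduce, with an enriched system of balls, the point-by-point transfinite construction of Lemma \ref{set-nonexp}. Working in the ambient space $\mathcal{N}(X,Y)$ with metric $d_\infty$ and taking $E=\Phi(f)$, suppose we are given a family $\{f_\alpha'\}\subseteq\mathcal{N}(X,Y)$ and radii $\{r_\alpha\}$ with $d_\infty(f_\alpha',f_\beta')\le r_\alpha+r_\beta$ and $\text{dist}(f_\alpha',\Phi(f))\le r_\alpha$ for all $\alpha,\beta$. I must produce $g'\in\Phi(f)$ with $d_\infty(g',f_\alpha')\le r_\alpha$ for every $\alpha$; equivalently, a nonexpansive extension $g'$ of $f$ such that $d_Y(g'(x),f_\alpha'(x))\le r_\alpha$ for all $x\in X$ and all $\alpha$. (That $\Phi(f)\ne\emptyset$ is guaranteed by Theorem \ref{thm-hyp}.)

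First I would record the only consequence of $\text{dist}(f_\alpha',\Phi(f))\le r_\alpha$ that is needed: since every member of $\Phi(f)$ agrees with $f$ on $A$, for each $\varepsilon>0$ there is $h\in\Phi(f)$ with $d_\infty(f_\alpha',h)<r_\alpha+\varepsilon$, whence $d_Y(f_\alpha'(a),f(a))\le d_\infty(f_\alpha',h)<r_\alpha+\varepsilon$ for every $a\in A$; letting $\varepsilon\to 0$ gives $d_Y(f_\alpha'(a),f(a))\le r_\alpha$ for all $a\in A$ and all $\alpha$. Next I would set $g'(a)=f(a)$ on $A$, well-order $X\setminus A$, and define $g'$ by transfinite recursion, maintaining the invariant that on the current domain $S$ the map $g'$ is nonexpansive, extends $f$, and satisfies $d_Y(g'(s),f_\alpha'(s))\le r_\alpha$ for all $s\in S$ and all $\alpha$. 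To place the value at a new point $x_0$, I would, exactly as in Lemma \ref{set-nonexp}, intersect the balls $B(g'(s),d(s,x_0))$ over $s\in S$ (which force nonexpansivity) with the additional balls $B(f_\alpha'(x_0),r_\alpha)$ over all $\alpha$ (which force the target constraint).

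The crux is checking that these centers are pairwise compatible so that hyperconvexity of $Y$ yields a nonempty intersection, and there are three types of pairs. For two constraints coming from $s,s'\in S$ one uses $d_Y(g'(s),g'(s'))\le d(s,s')\le d(s,x_0)+d(s',x_0)$, i.e. the invariant plus the triangle inequality; for two constraints coming from indices $\alpha,\beta$ one uses $d_Y(f_\alpha'(x_0),f_\beta'(x_0))\le d_\infty(f_\alpha',f_\beta')\le r_\alpha+r_\beta$. The mixed pair, between $s\in S$ and an index $\alpha$, is the only subtle one: here $d_Y(g'(s),f_\alpha'(x_0))\le d_Y(g'(s),f_\alpha'(s))+d_Y(f_\alpha'(s),f_\alpha'(x_0))\le r_\alpha+d(s,x_0)$, combining the invariant $d_Y(g'(s),f_\alpha'(s))\le r_\alpha$ with nonexpansivity of $f_\alpha'$, and its base case $s\in A$ is precisely the preliminary observation. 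Hence by hyperconvexity of $Y$ the intersection is nonempty, and choosing $g'(x_0)$ in it preserves the invariant; at limit stages the invariant passes to unions since all its clauses are pairwise or pointwise. The resulting $g'$ is a nonexpansive extension of $f$ lying within $r_\alpha$ of each $f_\alpha'$ pointwise, hence bounded and in $\Phi(f)$, so $\bigcap_\alpha B(f_\alpha',r_\alpha)\cap\Phi(f)\ne\emptyset$. I expect the main obstacle to be purely bookkeeping: verifying the mixed estimate uniformly (including its base case over $A$) and confirming that the transfinite recursion, now with an infinite augmented ball system, legitimately invokes hyperconvexity of $Y$ at every stage, just as in Lemma \ref{set-nonexp}.
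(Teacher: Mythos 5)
Your proposal is correct and follows essentially the same route as the paper's own proof: the identical transfinite point-by-point construction, extending $f$ on $A$ and choosing each new value $g'(x_0)$ in the intersection of the balls $B(g'(s),d(s,x_0))$ with the augmented family $B(f_\alpha'(x_0),r_\alpha)$, verified nonempty via exactly the same three types of pairwise compatibility checks and hyperconvexity of $Y$. Your additions---the $\varepsilon$-argument establishing $d_Y(f_\alpha'(a),f(a))\le r_\alpha$ on $A$ (which the paper dismisses as ``clear'') and the explicit invariant at limit stages---only spell out details the paper leaves to the reader.
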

 
\begin{proof}
We know that $\Phi (f)$ is nonempty due to Theorem \ref{thm-hyp}. Let $f\in {\mathcal N}(A,Y)$, $\{ f_\alpha\}_{\alpha\in{\mathcal A}} \subseteq {\mathcal N}(X,Y)$ and $\{r_\alpha\}_{\alpha\in\mathcal A}\subseteq {\mathbb R}^+$ such that $d_\infty (f_\alpha,f_\beta)\le r_\alpha+r_\beta$ and ${\rm dist}(f_\alpha, \Phi(f))\le r_\alpha$ for all $\alpha,\beta\in\mathcal A$. We need to prove that
\[\left( \bigcap_{\alpha\in{\mathcal A}} B(f_\alpha,r_\alpha)\right) \bigcap \Phi (f)\neq \emptyset.\]
We will construct an extension $f'$ of $f$ in the above intersection by transfinite induction. Let $f'(a)=f(a)$ for $a\in A$. Since ${\rm dist}(f_\alpha, \Phi(f))\le r_\alpha$ it is clear that $d(f_\alpha (a),f'(a))\le r_\alpha$ for every $a\in A$. Let $x_0\in X\setminus A$ and consider the intersection
\[A_0=\left( \bigcap_{a\in{ A}} B(f'(a),d(a,x_0))\right) \bigcap \left( \bigcap_{\alpha\in{\mathcal A}} B(f_\alpha(x_0),r_\alpha)\right).\]
A two-by-two case study and the hyperconvexity of $Y$ (see the $A_1$ case below for more details) easily show that $A_0$ is nonempty. Define $f'(x_0)$ as any point in $A_0$. 

Taking now $x_1\in X\setminus (A\cup \{x_0\})$, the corresponding intersection to look at is 
\[A_1=\left( \bigcap_{a\in{ A}} B(f'(a),d(a,x_1))\right)\bigcap B(f'(x_0),d(x_0,x_1)) \bigcap \left( \bigcap_{\alpha\in{\mathcal A}} B(f_\alpha (x_1),r_\alpha)\right).\]
We check next the hyperconvexity condition for $A_1$ ($a, a_1,  a_2$ stand for points in $A$):
\begin{align*}
d(f'(a_1),f'(a_2)) &\le d(a_1,a_2)\le d(a_1,x_1)+d(a_2,x_1),\\
d(f'(a),f'(x_0))&\le d(a,x_0)\le d(a,x_1)+d(x_0,x_1),\\
d(f'(a),f_\alpha(x_1))&\le d(f'(a),f_\alpha(a))+d(f_\alpha(a),f_\alpha(x_1))\le r_\alpha+d(a,x_1),\\
d(f'(x_0),f_\alpha(x_1))&\le d(f'(x_0),f_\alpha(x_0))+d(f_\alpha(x_0),f_\alpha(x_1))\le r_\alpha+d(x_0,x_1),\\
d(f_\alpha (x_1),f_\beta(x_1))&\le r_\alpha+r_\beta.
\end{align*}
Therefore, $A_1$ is nonempty and it suffices to define $f'(x_1)$ as any point in $A_1$. The proof is completed by transfinite induction.
\end{proof} 
 
\begin{remark}
Regarding the mapping $\Psi$, it does not seem that the approach applied to $\Phi$ in the hyperconvex case is also working.  
\end{remark}

\begin{remark} 
Notice that Lemma \ref{exthyp} improves \cite[Theorem 17]{Sin89}.
\end{remark}

Now, we can give the main result of this section.

\begin{theorem}\label{main5}
Let $X$ be a metric space, $A\subseteq X$ nonempty and $Y$ a hyperconvex metric space. Then there exists a nonexpansive mapping $\alpha\colon {\mathcal N}(A,Y)\to {\mathcal N}(X,Y)$ such that for all $g\in {\mathcal N}(A,Y)$, $\alpha (g)(a)=g(a)$ for every $a\in A$. 
\end{theorem}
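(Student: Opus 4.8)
The plan is to obtain $\alpha$ as the nonexpansive selection guaranteed by Theorem~\ref{Kha00}, applied to the multivalued extension operator $\Phi$ itself, viewed as a mapping between the appropriate metric spaces. The key structural facts have been assembled in the preceding lemmas and theorems of this section, so the argument is essentially a matter of verifying that all hypotheses of Theorem~\ref{Kha00} are met and then reading off the conclusion.

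First I would identify the correct source and target spaces. The source space should be $\mathcal{N}(A,Y)$ endowed with the supremum distance $d_\infty$, and the target space should be $\mathcal{N}(X,Y)$, also with $d_\infty$. By Theorem~\ref{3Kha00} (and the remark immediately following it, that the same proof applies to $\mathcal{N}(X,Y)$ rather than only $\mathcal{N}(Y,Y)$), the space $\mathcal{N}(X,Y)$ is hyperconvex under the supremum metric, since $Y$ is hyperconvex. This plays the role of the hyperconvex space ``$Y$'' in Theorem~\ref{Kha00}. The source space $\mathcal{N}(A,Y)$ need only be a metric space, which it is.

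Next I would check that $\Phi\colon \mathcal{N}(A,Y)\to \mathcal{E}\bigl(\mathcal{N}(X,Y)\bigr)$ is a multivalued mapping taking externally hyperconvex values, and that it is nonexpansive. For each $g\in\mathcal{N}(A,Y)$, Theorem~\ref{thm-gen-Kirszbraun} (or directly Theorem~\ref{thm-hyp}) guarantees $\Phi(g)\neq\emptyset$, and Lemma~\ref{exthyp} shows precisely that $\Phi(g)$ is externally hyperconvex in $\mathcal{N}(X,Y)$. Lemma~\ref{set-nonexp} shows that $\Phi$ is nonexpansive with respect to the Pompeiu--Hausdorff distance induced by $d_\infty$. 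Thus $\Phi$ satisfies all the hypotheses of Theorem~\ref{Kha00} with $X\rightsquigarrow \mathcal{N}(A,Y)$, $Y\rightsquigarrow\mathcal{N}(X,Y)$ and $T\rightsquigarrow\Phi$.

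Applying Theorem~\ref{Kha00} then yields a selection $\alpha\colon\mathcal{N}(A,Y)\to\mathcal{N}(X,Y)$ with $d_\infty(\alpha(f),\alpha(g))\le H(\Phi(f),\Phi(g))\le d_\infty(f,g)$, the last inequality being exactly Lemma~\ref{set-nonexp}; hence $\alpha$ is nonexpansive. Since $\alpha(g)\in\Phi(g)$ for every $g$, by definition of $\Phi$ the mapping $\alpha(g)$ is a nonexpansive extension of $g$, so $\alpha(g)(a)=g(a)$ for all $a\in A$, as required. The only point demanding minor care is the bookkeeping of hypotheses rather than any genuine obstacle: I would make sure that the target space in Theorem~\ref{Kha00} really is hyperconvex in our concrete instance, which is the content of Theorem~\ref{3Kha00} together with the observation that its proof transfers verbatim from $\mathcal{N}(Y,Y)$ to $\mathcal{N}(X,Y)$. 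Once this identification is secured, everything else follows immediately from the already-established lemmas, and the proof is essentially a one-line invocation of Theorem~\ref{Kha00}.
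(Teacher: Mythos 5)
Your proof is correct and takes exactly the same route as the paper, which likewise obtains $\alpha$ by applying Theorem~\ref{Kha00} to $\Phi$, using Theorem~\ref{3Kha00} for the hyperconvexity of $\mathcal{N}(X,Y)$ and Lemmas~\ref{set-nonexp} and~\ref{exthyp} for the nonexpansiveness and externally hyperconvex values of $\Phi$. One tiny citation slip: nonemptiness of $\Phi(g)$ follows from Theorem~\ref{thm-hyp} (as in Lemma~\ref{exthyp}), not from Theorem~\ref{thm-gen-Kirszbraun}, since here $X$ is an arbitrary metric space and $Y$ is hyperconvex rather than CAT$(\kappa)$ --- but you supply the correct alternative yourself.
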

\begin{proof}
It directly follows now from Lemmas \ref{set-nonexp} and \ref{exthyp}, and Theorems \ref{Kha00} and \ref{3Kha00}. 
\end{proof}

\begin{remark}
By considering adequate modifications in the proofs, Theorem \ref{main5} also holds for $\lambda (A,Y)$ instead of ${\mathcal N}(A,Y)$ and $\lambda$-Lipschitz extensions instead of nonexpansive extensions. 
\end{remark}

In particular, since complete $\mathbb R$-trees are hyperconvex spaces, we have the following corollary which improves the corresponding result from Section 3 (see Remark \ref{rmk-R-trees}).

\begin{corollary}
Let $X$ be a metric space, $A\subseteq X$ nonempty and $Y$ a complete $\mathbb R$-tree. Then the multivalued mapping $\Phi$ admits a nonexpansive selection.
\end{corollary}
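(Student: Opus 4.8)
The plan is to read the corollary as an immediate specialization of Theorem \ref{main5} to the case where the hyperconvex target space $Y$ is a complete $\mathbb{R}$-tree. The paper has already remarked (immediately before the corollary) that every complete $\mathbb{R}$-tree is hyperconvex, citing \cite{Kir88}; this is the only structural fact I need beyond the machinery of Section \ref{hyperconvexity}. Since the corollary only asks for the existence of a nonexpansive \emph{selection} of the multivalued mapping $\Phi$ — not for the single-valued nonexpansive extension operator $\alpha$ itself — it is in fact a slightly weaker statement than Theorem \ref{main5}, so the whole content is really just an unpacking of hypotheses.

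Concretely, I would proceed as follows. First, fix a complete $\mathbb{R}$-tree $Y$ and invoke the fact that $Y$ is then hyperconvex. With $X$ an arbitrary metric space and $A \subseteq X$ nonempty, all the hypotheses of Theorem \ref{main5} are met. Second, apply Theorem \ref{main5} directly to obtain a nonexpansive mapping $\alpha \colon \mathcal{N}(A,Y) \to \mathcal{N}(X,Y)$ with $\alpha(g)(a) = g(a)$ for every $a \in A$ and every $g \in \mathcal{N}(A,Y)$. Third, observe that this mapping $\alpha$ is precisely a selection of $\Phi$: by definition $\Phi(g)$ consists of all nonexpansive extensions of $g$ to $X$, and the conditions $\alpha(g) \in \mathcal{N}(X,Y)$ together with $\alpha(g)|_A = g$ say exactly that $\alpha(g) \in \Phi(g)$. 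Hence $\alpha$ witnesses that $\Phi$ admits a nonexpansive selection, which is the claim.

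There is no genuine obstacle here, since every nontrivial step has been discharged in the lemmas and theorems leading up to this point: Lemma \ref{set-nonexp} gives nonexpansivity of $\Phi$, Lemma \ref{exthyp} gives external hyperconvexity of its values, and Theorems \ref{Kha00} and \ref{3Kha00} supply the selection machinery used to prove Theorem \ref{main5}. The only point worth stating explicitly — and the reason this deserves a corollary rather than being folded into Theorem \ref{main5} — is that the $\mathbb{R}$-tree case does not require curvature bounds on $X$, so it properly strengthens the corresponding Section \ref{sect-lsc-cont-sel} result (which via Remark \ref{rmk-R-trees} only yielded a \emph{continuous} selection of $\Phi$ for $\mathbb{R}$-tree targets). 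Thus the forward-looking emphasis of the proof should be on the phrase ``in particular,'' making clear that the improvement is the upgrade from continuity to nonexpansivity under weaker source-space hypotheses, rather than on any new computation.
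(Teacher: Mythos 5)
Your proposal is correct and takes essentially the same route as the paper, which likewise derives the corollary immediately from Theorem \ref{main5} together with the fact (from \cite{Kir88}) that every complete $\mathbb{R}$-tree is hyperconvex, with the improvement over Remark \ref{rmk-R-trees} being precisely the upgrade from a continuous to a nonexpansive selection without curvature assumptions on $X$. No gaps to report.
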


It is now natural to wonder about results from Section \ref{convexity} in the hyperconvex setting. First, we need to clarify the notion of convex hull. A natural option in this case is to consider the admissible hull of a set.

\begin{definition}
Let $X$ be a metric space and $A\subseteq X$ nonempty and bounded. Then the admissible hull ${\rm cov}(A)$ of $A$ is the intersection of all the closed balls containing $A$. A set is called admissible if it coincides with its admissible hull.
\end{definition}

It is easy to see that 
\[{\rm cov}(A)=\bigcap_{x\in X}B(x,r_x(A)),\]
where $r_x(A)=\sup\{d(x,a)\colon a\in A\}$. Now we can define $\Phi_c$ as in Section \ref{convexity} replacing $\overline{\rm co}(f(A))$ with ${\rm cov}(f(A))$. In order to prove Theorem \ref{main5} for $\Phi_c$ in the hyperconvex setting we only need to show that Lemmas \ref{set-nonexp} and \ref{exthyp} still hold true. This is indeed the case. We point out next how to modify the corresponding proofs.

\begin{lemma}
Let $X$ be a metric space, $A\subseteq X$ nonempty and $Y$ a hyperconvex metric space. Then $\Phi_c$ is a nonexpansive multivalued mapping.
\end{lemma}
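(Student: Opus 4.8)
The plan is to adapt the point-by-point transfinite construction of Lemma \ref{set-nonexp}, adding at each stage the constraint that the value being defined lie in ${\rm cov}(g(A))$. As there, it suffices to show that for every $f' \in \Phi_c(f)$ there is some $g' \in \Phi_c(g)$ with $d_\infty(f',g') \le d_\infty(f,g)$ (the reverse estimate following by symmetry). Set $r = d_\infty(f,g)$ and begin with $g'(a) = g(a)$ for all $a \in A$, which already lies in ${\rm cov}(g(A))$. The essential new idea is to encode the admissible-hull condition as an extra family of ball constraints, via the representation
\[
{\rm cov}(g(A)) = \bigcap_{y \in Y} B\bigl(y, r_y(g(A))\bigr), \qquad r_y(g(A)) = \sup_{a \in A} d(y, g(a)).
\]

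Concretely, when defining $g'$ at a new point $x_0 \in X \setminus A$ I would consider the intersection
\[
A_0 = \left( \bigcap_{a \in A} B\bigl(g'(a), d(a,x_0)\bigr) \right) \cap B\bigl(f'(x_0), r\bigr) \cap \left( \bigcap_{y \in Y} B\bigl(y, r_y(g(A))\bigr) \right),
\]
and analogously at a general transfinite step, adding the balls $B(g'(x_\beta), d(x_\beta, x_\gamma))$ for the previously defined points $x_\beta$. Any point of $A_0$ automatically lies in ${\rm cov}(g(A))$, so the resulting $g'$ satisfies $g'(X) \subseteq {\rm cov}(g(A))$, extends $g$, is nonexpansive, and stays within $r$ of $f'$. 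The pairwise checks among the balls of the first two families are exactly those of Lemma \ref{set-nonexp}, so the only new verifications concern the ${\rm cov}$-balls.

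Those extra two-by-two checks are mostly routine: the balls $B(y_1, r_{y_1}(g(A)))$ and $B(y_2, r_{y_2}(g(A)))$ meet because $d(y_1,y_2) \le r_{y_1}(g(A)) + r_{y_2}(g(A))$ (estimate through any fixed $g(a)$), while each ${\rm cov}$-ball meets the balls $B(g'(a), d(a,x_0))$ and $B(g'(x_\beta), d(x_\beta,x_\gamma))$ since the centers $g'(a) = g(a)$ and $g'(x_\beta)$ already lie in ${\rm cov}(g(A))$. \textbf{The main obstacle is the compatibility of $B(f'(x_0), r)$ with the ${\rm cov}$-balls}, namely that $d(f'(x_0), y) \le r + r_y(g(A))$ for every $y \in Y$. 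Here I would exploit that $f' \in \Phi_c(f)$, so $f'(x_0) \in {\rm cov}(f(A))$ and hence $d(f'(x_0), y) \le r_y(f(A))$; it then remains to observe that the radius functional is $1$-Lipschitz in the datum, in the sense that
\[
r_y(f(A)) = \sup_{a \in A} d(y, f(a)) \le \sup_{a \in A} \bigl( d(y, g(a)) + d(g(a), f(a)) \bigr) \le r_y(g(A)) + r.
\]
Combining the two inequalities gives $d(f'(x_0), y) \le r_y(g(A)) + r$, exactly as required. With all pairwise intersections nonempty, hyperconvexity of $Y$ yields $A_0 \ne \emptyset$ (and similarly at each transfinite step), completing the construction.
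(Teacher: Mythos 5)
Your proposal matches the paper's proof essentially step for step: the paper likewise encodes ${\rm cov}(g(A))$ as $\bigcap_{y\in Y}B\left(y,r_y(g(A))\right)$, inserts this extra family of balls into the intersections $A_0$, $A_1$, \dots{} of the construction from Lemma \ref{set-nonexp}, and settles the one nontrivial pairwise check with exactly your estimate $d(y,f'(x_0))\le r_y(f(A))\le r_y(g(A))+r$, finishing by transfinite induction. The only detail the paper includes that you omit is the preliminary observation that $\Phi_c(f)\ne\emptyset$ (admissible subsets of hyperconvex spaces are hyperconvex, so Theorem \ref{thm-hyp} applies with target ${\rm cov}(f(A))$), which is needed for the nonexpansiveness statement in terms of the Pompeiu--Hausdorff distance to be meaningful.
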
 

\begin{proof}
First we need to show that $\Phi_c(f)$ is nonempty for $f\in{\mathcal N}(A,Y)$ which directly follows from the fact that admissible subsets of hyperconvex spaces are hyperconvex themselves and so, from Theorem \ref{thm-hyp}, extensions $f'$ of $f$ exist such that $f'\in{\mathcal N}(X,{\rm cov}(f(A)))$. Now, define the set $A_0$ as in the proof of Lemma \ref{set-nonexp} in the following way:
\[A_0=\left(\bigcap_{a\in A}B(g'(a),d(a,x_0))\right)\bigcap \left(\bigcap_{y\in Y}B(y,r_y(g(A)))\right)\bigcap B(f'(x_0),r).\]
To apply hyperconvexity the only case which is not trivial is for pairs of balls centered at $y\in Y$ and at $f'(x_0)$. But for this case we have that, since $f'\in{\mathcal N}(X,{\rm cov}(f(A)))$, $d(y,f'(x_0))\le r_y(f(A))=\sup_{a\in A} d(y,f(a))\le \sup_{a\in A} (d(y,g(a))+d(g(a),f(a)))\le r_y(g(A))+r$. Therefore $A_0\neq \emptyset$ and we may define $g'(x_0)$ as any point in $A_0$.

For the next step we need to consider
\[A_1=A'_1\cap B(g'(x_0),d(x_0,x_1)),\]
where
\[A'_1 = \left(\bigcap_{a\in A}B(g'(a),d(a,x_1))\right)\bigcap \left(\bigcap_{y\in Y}B(y,r_y(g(A)))\right)\bigcap B(f'(x_1),r).\] 
Now, since $g'(a)\in g(A)$ the case $d(g'(a),y)$ for $a\in A$ and $y\in Y$ follows. The case $d(y,g'(x_0))$ follows by the previous step. The last case, $d(y,f'(x_1))$ follows just as the case discussed for $A_0$ and so $A_1$ is also nonempty. We complete the proof by transfinite induction. 
\end{proof}

The following lemma also holds. 

 \begin{lemma}
Let $X$ be a metric space, $A\subseteq X$ nonempty and $Y$ a hyperconvex metric space. Then $\Phi_c (f)$ is externally hyperconvex in ${\mathcal N}(X,Y)$ for every $f\in {\mathcal N}(A,Y)$.
 \end{lemma}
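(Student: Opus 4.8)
The plan is to mirror the proof of Lemma \ref{exthyp} almost verbatim, inserting at each transfinite step the additional constraints coming from the admissible hull, exactly as was done when passing from Lemma \ref{set-nonexp} to its $\Phi_c$-counterpart. First I would note that $\Phi_c(f)\neq\emptyset$: admissible subsets of a hyperconvex space are themselves hyperconvex, so ${\rm cov}(f(A))$ is hyperconvex, and Theorem \ref{thm-hyp} yields an extension $f'$ of $f$ with $f'\in{\mathcal N}(X,{\rm cov}(f(A)))$, which lies in $\Phi_c(f)$. Then, given a family $\{f_\alpha\}_{\alpha\in\mathcal A}\subseteq{\mathcal N}(X,Y)$ and radii $\{r_\alpha\}$ with $d_\infty(f_\alpha,f_\beta)\le r_\alpha+r_\beta$ and ${\rm dist}(f_\alpha,\Phi_c(f))\le r_\alpha$, I would build the desired extension $f'$ point by point, starting from $f'(a)=f(a)$ for $a\in A$, and requiring at each new point that $f'$ land in ${\rm cov}(f(A))$, i.e. in $\bigcap_{y\in Y}B(y,r_y(f(A)))$.

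Concretely, for the first new point $x_0\in X\setminus A$ I would take
\[
A_0=\left(\bigcap_{a\in A}B(f'(a),d(a,x_0))\right)\bigcap\left(\bigcap_{y\in Y}B(y,r_y(f(A)))\right)\bigcap\left(\bigcap_{\alpha\in\mathcal A}B(f_\alpha(x_0),r_\alpha)\right),
\]
and for a second new point $x_1$ the analogous intersection with the extra factor $B(f'(x_0),d(x_0,x_1))$ adjoined. The verification that the balls pairwise intersect splits into the cases already handled in Lemma \ref{exthyp} together with the new cases involving a center $y\in Y$. For two centers $y,y'\in Y$ the condition $d(y,y')\le r_y(f(A))+r_{y'}(f(A))$ holds since picking any $a\in A$ gives $d(y,y')\le d(y,f(a))+d(f(a),y')\le r_y(f(A))+r_{y'}(f(A))$. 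For a pair $B(f'(a),\cdot)$ and $B(y,r_y(f(A)))$ the point $f'(a)=f(a)\in f(A)$ already satisfies $d(y,f'(a))\le r_y(f(A))$, so these balls even share the center-type point. The case $d(y,f'(x_0))$ at the step for $x_1$ follows because $f'(x_0)$ was chosen in ${\rm cov}(f(A))$ at the previous step. The genuinely new estimate is the pair involving $y$ and $f_\alpha(x_1)$: using ${\rm dist}(f_\alpha,\Phi_c(f))\le r_\alpha$ there is some $h\in\Phi_c(f)$ with $d_\infty(f_\alpha,h)\le r_\alpha$, and since $h(x_1)\in{\rm cov}(f(A))$ one gets
\[
d(y,f_\alpha(x_1))\le d(y,h(x_1))+d(h(x_1),f_\alpha(x_1))\le r_y(f(A))+r_\alpha.
\]
Hyperconvexity of $Y$ then makes $A_0$, $A_1$, and each subsequent intersection nonempty.

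The main obstacle, and the only place requiring real care, is precisely this interaction between the admissibility constraint $\bigcap_y B(y,r_y(f(A)))$ and the externally hyperconvex constraint $\bigcap_\alpha B(f_\alpha(x),r_\alpha)$: one must be sure that the hypothesis ${\rm dist}(f_\alpha,\Phi_c(f))\le r_\alpha$ (as opposed to ${\rm dist}(f_\alpha,\Phi(f))\le r_\alpha$) delivers a \emph{nearby extension whose values already lie in the admissible hull}, which is what the estimate above exploits. All remaining pairwise conditions coincide with those verified in Lemma \ref{exthyp}, and the transfinite induction proceeds identically. I would close by remarking that the resulting $f'$ lies in $\bigl(\bigcap_\alpha B(f_\alpha,r_\alpha)\bigr)\cap\Phi_c(f)$, establishing external hyperconvexity of $\Phi_c(f)$ in ${\mathcal N}(X,Y)$; combined with the preceding lemma this lets Theorems \ref{Kha00} and \ref{3Kha00} produce a nonexpansive selection of $\Phi_c$ exactly as in Theorem \ref{main5}.
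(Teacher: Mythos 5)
Your proposal is correct and follows essentially the same route as the paper: the identical intersections $A_0$ and $A_1$ (with the admissibility constraint $\bigcap_{y\in Y}B(y,r_y(f(A)))$ adjoined to the construction of Lemma \ref{exthyp}), the same pairwise ball checks, and the same transfinite induction. The one small slip is your claim that ${\rm dist}(f_\alpha,\Phi_c(f))\le r_\alpha$ yields some $h\in\Phi_c(f)$ with $d_\infty(f_\alpha,h)\le r_\alpha$ exactly --- the infimum need not be attained --- but since the hyperconvexity conditions are non-strict this is repaired, exactly as in the paper, by taking $h$ with $d_\infty(f_\alpha,h)\le r_\alpha+\varepsilon$ and letting $\varepsilon\to 0$.
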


\begin{proof}
This proof follows the same patterns as the one of Lemma \ref{exthyp}. Let $f\in {\mathcal N}(A,Y)$, $\{ f_\alpha\}_{\alpha\in{\mathcal A}}\subseteq {\mathcal N}(X,Y)$ and $(r_\alpha)\subseteq {\mathbb R}^+$ such that $d_\infty (f_\alpha,f_\beta)\le r_\alpha+r_\beta$ and ${\rm dist}(f_\alpha, \Phi_c(f))\le r_\alpha$ for all $\alpha,\beta \in \mathcal{A}$. We need to prove that
\[\left( \bigcap_{\alpha\in{\mathcal A}} B(f_\alpha,r_\alpha)\right) \bigcap \Phi_c (f)\neq \emptyset.\]
It is possible to construct an extension $f'$ of $f$ in the above intersection by transfinite induction and this suffices to prove the lemma. Now the set $A_0$ to consider is given by:
\[A_0=\left( \bigcap_{a\in{ A}} B(f'(a),d(a,x_0))\right)\bigcap \left(\bigcap_{y\in Y}B(y,r_y(f(A)))\right)\bigcap \left( \bigcap_{\alpha\in{\mathcal A}} B(f_\alpha(x_0),r_\alpha)\right).\]
We need to check the hyperconvexity condition for $d(f'(a),y)$ with $a\in A$ and $y\in Y$, and $d(f_\alpha(x_0),y)$ for $\alpha \in {\mathcal A}$ and $y\in Y$. The first case is trivial as $f'(a)=f(a)$. For the second one, we need to recall that ${\rm dist}(f_\alpha,\Phi_c(f))\le r_\alpha$ and so, for $\varepsilon >0$, there exists $z(=z_\alpha)\in {\rm cov}(f(A))$ such that $d(f_\alpha(x_0),z)\le r_\alpha+\varepsilon$. Therefore, for $y\in Y$ and $\alpha\in {\mathcal A}$,
$$
d(y,f_\alpha (x_0))\le d(y,z)+d(z,f_\alpha(x_0))\le r_y(f(A))+ r_\alpha+\varepsilon.
$$
The hyperconvexity condition finally follows because $\varepsilon$ is arbitrary.

The set $A_1$ in this case is given by 
\[A_1 = A'_1 \cap B(f'(x_0),d(x_0,x_1)),\]
where
$$
 A'_1=\left( \bigcap_{a\in{ A}} B(f'(a),d(a,x_1))\right)\bigcap \left(\bigcap_{y\in Y}B(y,r_y(f(A)))\right) \bigcap \left( \bigcap_{\alpha\in{\mathcal A}} B(f_\alpha (x_1),r_\alpha)\right). 
 $$

We only need to check intersections with balls centered at $y\in Y$. The cases $d(f'(a),y)$ and $d(y,f_\alpha(x_1))$ follow as above. The case that remains to check is $d(y,f'(x_0))$ which follows by construction and, in general, by the inductive hypothesis. 
\end{proof}

Finally, we can state the following result.

\begin{theorem}\label{Phic}
Let $X$ be a metric space, $A\subseteq X$ nonempty and $Y$ a hyperconvex metric space. Then there exists a nonexpansive mapping $\alpha_c\colon {\mathcal N}(A,Y)\to {\mathcal N}(X,Y)$ such that for all $g\in {\mathcal N}(A,Y)$, $\alpha_c (g)(a)=g(a)$ for every $a\in A$ and $\alpha_c (g)(X)\subseteq {\rm cov}(g(A))$. 
\end{theorem}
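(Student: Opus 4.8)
The plan is to mirror exactly the structure used for Theorem \ref{main5}, since Theorem \ref{Phic} is the admissible-hull analogue of it. The statement concerns the multivalued mapping $\Phi_c$, and the two lemmas immediately preceding this theorem have already established the two properties of $\Phi_c$ that the selection machinery requires: namely that $\Phi_c$ is a nonexpansive multivalued mapping, and that $\Phi_c(f)$ is externally hyperconvex in $\mathcal{N}(X,Y)$ for every $f \in \mathcal{N}(A,Y)$. With these in hand the proof should be a one-line deduction, exactly as in the proof of Theorem \ref{main5}.

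Concretely, I would first recall that $\mathcal{N}(X,Y)$ is itself a hyperconvex metric space under the supremum distance; this is precisely Theorem \ref{3Kha00} (applied with $\lambda = 1$, so that $1(X,Y) = \mathcal{N}(X,Y)$), whose proof carries over from the $Y$-into-$Y$ case to the $X$-into-$Y$ case with no modification, as noted in the text. Next, viewing $\Phi_c$ as a multivalued mapping $T = \Phi_c \colon \mathcal{N}(A,Y) \to \mathcal{E}\left(\mathcal{N}(X,Y)\right)$, the two preceding lemmas guarantee respectively that $T$ is nonexpansive with respect to the Pompeiu-Hausdorff distance and that each value $T(g)$ is externally hyperconvex in $\mathcal{N}(X,Y)$.

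I would then invoke Theorem \ref{Kha00} with the source space $\mathcal{N}(A,Y)$ and the target hyperconvex space $\mathcal{N}(X,Y)$: this yields a selection $\alpha_c \colon \mathcal{N}(A,Y) \to \mathcal{N}(X,Y)$ of $\Phi_c$ satisfying $d_\infty(\alpha_c(g_1),\alpha_c(g_2)) \le H(\Phi_c(g_1),\Phi_c(g_2))$ for all $g_1,g_2 \in \mathcal{N}(A,Y)$. Combining this with the nonexpansivity of $\Phi_c$ established in the first lemma gives $d_\infty(\alpha_c(g_1),\alpha_c(g_2)) \le d_\infty(g_1,g_2)$, so that $\alpha_c$ is nonexpansive. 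Finally, because $\alpha_c(g)$ is by construction an element of $\Phi_c(g)$, the defining properties of $\Phi_c$ guarantee exactly the two remaining requirements of the theorem: that $\alpha_c(g)$ extends $g$, i.e.\ $\alpha_c(g)(a) = g(a)$ for every $a \in A$, and that $\alpha_c(g)(X) \subseteq \mathrm{cov}(g(A))$.

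There is essentially no analytic obstacle remaining, since all the hard work is precisely encapsulated in the two preceding lemmas (whose transfinite-induction arguments are the genuine technical content) and in the imported Theorems \ref{Kha00} and \ref{3Kha00}. The only point that warrants a word of care is confirming that the hypotheses of Theorem \ref{Kha00} are met with $Z = \mathcal{N}(X,Y)$ hyperconvex and values in $\mathcal{E}(Z)$; this is exactly what the two lemmas supply, so the proof reduces to citing Lemmas on $\Phi_c$ together with Theorems \ref{Kha00} and \ref{3Kha00}, in complete parallel with the proof of Theorem \ref{main5}.
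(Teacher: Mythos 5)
Your proposal is correct and follows exactly the paper's intended argument: the paper states Theorem \ref{Phic} without a separate proof precisely because, as in Theorem \ref{main5}, it follows directly from the two preceding lemmas on $\Phi_c$ (nonexpansivity and external hyperconvexity of its values) together with Theorems \ref{Kha00} and \ref{3Kha00}. Your added care in spelling out the chain $d_\infty(\alpha_c(g_1),\alpha_c(g_2)) \le H(\Phi_c(g_1),\Phi_c(g_2)) \le d_\infty(g_1,g_2)$ and in noting that Theorem \ref{3Kha00} transfers from $\lambda(Y,Y)$ to $\mathcal{N}(X,Y)$ matches the paper's own remarks verbatim.
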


\begin{remark}
Again, by considering adequate modifications in the proofs, one can see that Theorem \ref{Phic} holds in fact for $\lambda (A,Y)$ and $\lambda$-Lipschitz extensions. 
\end{remark} 

\begin{remark}
Theorem 1 in \cite{Sin89} gives the same result as Theorem \ref{Kha00} but for multivalued mappings with admissible values instead of externally hyperconvex subsets. As far as the authors know, Theorem \ref{main5} may be the first application of Theorem \ref{Kha00} where the externally hyperconvex condition plays a substantial role. In fact, values of the mapping $\Phi$, which have been proved to be externally hyperconvex, need not be admissible. Consider, for instance, $X$ as the real interval $[0,2]$, $A\subseteq X$ as $[1,2]$ and $Y=[0,1]$. Define $f\in{\mathcal N}(A,Y)$ as the function constantly equal to $1$. Then the functions $g(x)=1$ for $x\in X$ and
$$
h(x) = \begin{cases} x &\mbox{if } x\in [0,1] \\ 
1 & \mbox{if } x\in [1,2] \end{cases} 
$$
are in $\Phi (f)$. Therefore, any ball in ${\mathcal N}(X,Y)$ containing $\Phi (f)$ must be of radius at least $1/2$ and, in particular, it must contain the function constantly equal to $3/4$ which is not in $\Phi (f)$.
 \end{remark}

\begin{remark}
In this section we approached the case $\Phi_c$ in a direct way and not going through the metric projection as in Section \ref{convexity}. In contrast to the case of CAT$(\kappa)$ spaces with $\kappa\le 0$ where metric projections on closed and convex subsets are singlevalued, projections onto admissible subsets of hyperconvex metric spaces are multivalued. However, as it was shown in \cite{Sin89} they admit a nonexpansive selection (it was later shown in \cite{KhaKirMar00} that the same holds for externally hyperconvex subsets). This problem was further studied and the interested reader can find more about it in \cite{EspKha01,Esp05}. 
\end{remark}

\section{Acknowledgements}
Rafa Esp\' inola was supported by DGES, Grant MTM2012-34847C02-01 and Junta de Andaluc\'ia, Grant FQM-127. Adriana Nicolae was supported by a grant of the Romanian Ministry of Education, CNCS - UEFISCDI, project number PN-II-RU-PD-2012-3-0152. Part of this work was carried out while Adriana Nicolae was visiting the University of Seville. She would like to thank the Department of Mathematical Analysis and the Institute of Mathematics of the University of Seville (IMUS).

\end{document}